\DeclareMathAlphabet{\zap}{OT1}{pzc}{m}{it}
\def\CC{\mathbb C}
\def\drc{\slashed{D}}
\newtheorem{main}{Theorem}
\DeclareMathOperator{\Aut}{Aut}
\DeclareMathOperator{\Iso}{Iso}
\newtheorem{thm}{Theorem}
\newtheorem*{them}{Theorem}
\newtheorem{lem}{Lemma}
\newtheorem{prop}{Proposition}
\newtheorem{*corem}{Corollary}
\newtheorem{defn}{Definition}
\newtheorem*{conj}{Conjecture}
\newenvironment{xpl}{\mbox{ }\\ {\bf  Example.}\mbox{ }}{
\hfill $\diamondsuit$\mbox{}\bigskip}
\newenvironment{rmk}{\mbox{ }\\{\bf  Remark.}\mbox{ }}{
\hfill $\diamondsuit$\mbox{}\bigskip}
\def\ZZ{{\mathbb Z}}
\def\RR{{\mathbb R}}
\def\CP{{\mathbb C \mathbb P}}
\def\thor{\mbox{\thorn}}
\begin{document}

\title{Weyl Curvature, Del Pezzo Surfaces, and  Almost-K\"ahler Geometry}

\author{Claude LeBrun\thanks{Supported 
in part by  NSF grant DMS-1205953.}\\Stony
 Brook University}

\date{}
\maketitle

 \begin{abstract}	
If  a smooth compact $4$-manifold $M$ admits a K\"ahler-Einstein metric
 $g$ of positive scalar curvature, Gursky \cite{gursky} showed that its conformal class 
 $[g]$ is an absolute minimizer of the Weyl functional among all conformal
 classes with positive Yamabe constant.  Here we prove  that, with the same
 hypotheses, $[g]$ also  minimizes of the Weyl functional on a different open set of 
 conformal classes, most of which have {\em negative} Yamabe constant.  
 An analogous minimization result 
 is then proved for Einstein metrics $g$ which are Hermitian, but not K\"ahler. 
\end{abstract}

 The 
curvature tensor $\mathcal{R}$ of a smooth Riemanian $n$-manifold $(M,g)$ 
 can  invariantly be decomposed into 
the scalar curvature $s$, the 
trace-free Ricci curvature $\mathring{r}$, 
and the  Weyl curvature $W$:
$${\mathcal{R}^{ab}}_{cd}= {W^{ab}}_{cd} + \frac{4}{n-2} \mathring{r}^{[a}_{[c}\delta^{b]}_{d]}+\frac{2}{n(n-1)} ~s~\delta^a_{[c}\delta^b_{d]}.$$
The Weyl curvature tensor is exactly the conformally invariant part of $\mathcal{R}$,
in the sense that 
 ${W^a}_{bcd}$ is unchanged if we multiply  $g$ by any smooth 
smooth positive function, while $\mathring{r}$ and $s$   can both 
be made to vanish at any given point by making a suitable choice of conformal factor. 
Moreover, if $n\geq 4$, a Riemannian metric is locally conformally flat iff 
its Weyl tensor vanishes identically.  

If $M$ is a smooth compact oriented $n$-manifold,  $n\geq 4$, 
the 
{\em Weyl functional}
$$
\mathscr{W}([g ]) = \int_M |W_g|^{n/2}d\mu_g
$$
then only depends on the conformal class
$$
[g] = \{ u^2 g~|~ u: M \stackrel{C^\infty}{\to} \RR^+\}
$$
of the metric, and may 
be considered as  a natural measure of how far $[g]$  deviates from conformal flatness. 
It is thus natural and interesting to 
study the infimum of $\mathscr{W}$ among all metrics on a given manifold $M$, and to 
ask  whether there is actually a minimizing metric which achieves this infimum. 
This problem seems to have first
been discussed by  Atiyah, Hitchin, and Singer \cite{AHS}, who observed that, when $n=4$,
\begin{equation}
\label{sig}
\mathscr{W}([g ]) \geq 12\pi^2 |\tau (M)|
\end{equation}
where $\tau (M) = b_+(M)-b_-(M)=p_1(M)/3$ denotes the signature
 of $M$. 
 Soon afterwards, Osamu Kobayashi \cite{okobweyl} went on to   observe  that
 this  generalizes to other dimensions: whenever 
 the oriented cobordism class  $[M]$ has infinite order,  
   $\inf \mathscr{W}$ must be strictly positive. 
 
 The $4$-dimensional case  features many idiosyncracies which 
 make the problem particularly important and compelling in this dimension. For example, if 
 $g$ is an Einstein metric on a smooth compact $4$-manifold $M$,  its conformal 
 class $[g]$ is a critical point of the Weyl functional; but  this is not true
 in higher dimensions, as  the Riemannian product of the
 unit $2$-sphere with the round $m$-sphere of radius $\sqrt{m-1} \neq  1$ is an Einstein
 manifold that does 
 not solve the Euler-Lagrange equations for $\mathscr{W}$. Generally speaking, 
 the special 
 character of $n=4$ arises from the fact that there is an invariant decomposition 
 $$\Lambda^2 = \Lambda^+\oplus \Lambda^-,$$
 depending only on the orientation and the conformal class $[g]$, 
 of  the $2$-forms into their self-dual and anti-self-dual parts.
  This in turn induces a conformally invariant decomposition
 $$W= W_++W_-$$
 of the Weyl tensor into the self-dual and anti-self dual Weyl tensors
  $W_\pm$,
 explicitly given by 
 $${(W_\pm)^a}_{bcd} = {\textstyle \frac{1}{2}} {W^a}_{bcd} \pm {\textstyle \frac{1}{4}} {(d\mu)_{cd}}^{ef}{W^a}_{bef}.$$
 Using this, the $4$-dimensional Thom-Hirzebruch signature formula $\tau = p_1/3$
 can be written as 
 $$
 \tau (M) = \frac{1}{12\pi^2} \int_M \left(|W_+|_g^2 - |W_-|_g^2 \right) d\mu_g 
 $$
 for any metric $g$ on $M$. 
Since 
 $$\mathscr{W}([g ]) =  \int_M \left(|W_+|_g^2 + |W_-|_g^2 \right) d\mu_g ~;$$
 we immediately deduce  \eqref{sig}, with equality iff  $W_-= 0$ (in which case $[g]$ is
 said to be {\em self-dual}) 
 or 
 $W_+= 0$ (in which case $[g]$ is
 said to be {\em anti-self-dual}). The same reasoning  yields the identity 
 $$
 \mathscr{W}([g ]) =  - 12\pi^2 \tau (M) + 2  \int_M |W_+|_g^2  d\mu_g, 
 $$
 so that the study of the Weyl functional on a fixed $4$-manifold is completely equivalent to
 studying the $L^2$-norm of the self-dual Weyl curvature.

 One remarkable consequence   \cite{AHS}  is that   the
 Fubini-Study metric on $\CP_2$ is an absolute minimizer of the Weyl functional; moreover, 
 Poon \cite{poonthesis} proved that, up to conformal isometry, this is 
 the {\em only}  minimizer  on $\CP_2$   with  positive Yamabe constant. 
 While these arguments hinge on the fact that the Fubini-Study metric is self-dual, one might wonder 
 whether  this might somehow also reflect the fact that it  is  a {\em K\"ahler-Einstein} metric. For example, 
 the standard product metric on $S^2\times S^2$ is  also K\"ahler-Einstein, and 
 Kobayashi \cite{okobweyl} discovered interesting evidence in support of the conjecture that
 this metric is also a minimizer of the Weyl functional. In fact,  an elegant argument 
 due to Gursky \cite{gursky} provides a beautiful, general  result in this direction:

 \begin{them}[Gursky]
 Let $M$ be a smooth compact oriented $4$-manifold with $b_+\neq 0$. Then 
 any conformal class $[g]$ on $M$ with Yamabe constant $Y([g]) > 0$  satisfies 
 $$\int_M |W_+|^2d\mu \geq \frac{4\pi^2}{3} (2\chi + 3\tau ) (M),$$
 with equality iff $[g]$ contains a K\"ahler-Einstein metric $g$ with $\lambda >0$. 
 \end{them}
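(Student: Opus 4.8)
The plan is to extract a harmonic self-dual $2$-form from the hypothesis $b_+\neq 0$, run it through a Weitzenb\"ock argument, and then combine the resulting estimate with the Gauss--Bonnet and signature formulas. Since $\int_M|W_+|^2\,d\mu$ is a conformal invariant, I am free to choose the representative $g\in[g]$ that is a Yamabe minimizer; because $Y([g])>0$ this $g$ has constant \emph{positive} scalar curvature $s$, and $\int_M s^2\,d\mu = Y([g])^2$. Since $b_+\neq 0$ there is a nonzero $g$-harmonic self-dual $2$-form $\omega$, with $\int_M|\omega|^2\,d\mu = [\omega]\!\cdot\![\omega]>0$. The Weitzenb\"ock formula on $\Lambda^+$ reads $(d+d^*)^2\omega = \nabla^*\nabla\omega - 2W_+(\omega) + \tfrac{s}{3}\omega$, so harmonicity gives $\nabla^*\nabla\omega = 2W_+(\omega) - \tfrac{s}{3}\omega$.

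The crux is a pointwise computation. Contracting the Weitzenb\"ock identity with $\omega$, using the eigenvalue estimate $\langle W_+(\omega),\omega\rangle \leq \sqrt{2/3}\,|W_+|\,|\omega|^2$ (the top eigenvalue of the trace-free symmetric operator $W_+$ on the $3$-dimensional $\Lambda^+$ is at most $\sqrt{2/3}$ times its norm), and the refined Kato inequality $|\nabla\omega|^2\geq\tfrac32\,|\nabla|\omega||^2$ for harmonic self-dual $2$-forms, one gets a differential inequality for $f=|\omega|$. The decisive move is then to substitute $f=u^2$: the exponent $2$ is the unique value that makes the first-order terms cancel, and what survives is precisely
\[
\Delta u + \tfrac{s}{6}\,u \;\le\; \sqrt{\tfrac{2}{3}}\,|W_+|\,u, \qquad u:=|\omega|^{1/2}\geq 0,
\]
an inequality for (a multiple of) the conformal Laplacian $6\Delta+s$. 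I expect this to be the main obstacle: pinning down the Kato constant (the argument is rigid here --- with any other constant the left side would not reassemble into the conformal Laplacian), and then justifying the displayed inequality globally, since $u$ is a priori only controlled on $M\setminus\{\omega=0\}$. This should be dealt with by the standard regularization $u_\varepsilon=(|\omega|^2+\varepsilon)^{1/4}$ followed by $\varepsilon\to 0$, using that the zero set of $\omega$ is negligible.

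Multiplying by $u$ and integrating, the left side becomes $\tfrac16$ of the Yamabe quadratic form of $u$, so by the definition of $Y([g])$ and Cauchy--Schwarz,
\[
\tfrac{1}{6}\,Y([g])\Big(\!\int_M u^4\Big)^{1/2}\le\int_M\!\Big(|\nabla u|^2+\tfrac{s}{6}u^2\Big)d\mu\le\sqrt{\tfrac{2}{3}}\int_M|W_+|\,u^2\,d\mu\le\sqrt{\tfrac{2}{3}}\Big(\!\int_M|W_+|^2\Big)^{1/2}\!\Big(\!\int_M u^4\Big)^{1/2}.
\]
Since $\int_M u^4=\int_M|\omega|^2>0$ I may cancel it, and since $Y([g])>0$ both sides of the surviving inequality are nonnegative and may be squared, yielding $Y([g])^2\leq 24\int_M|W_+|^2\,d\mu$. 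Feeding this into the combined Gauss--Bonnet/signature identity
\[
4\pi^2(2\chi+3\tau)(M)=\tfrac{1}{24}\!\int_M\! s^2\,d\mu+2\!\int_M\!|W_+|^2\,d\mu-\tfrac12\!\int_M\!|\mathring{r}|^2\,d\mu=\tfrac{Y([g])^2}{24}+2\!\int_M\!|W_+|^2\,d\mu-\tfrac12\!\int_M\!|\mathring{r}|^2\,d\mu,
\]
and discarding the nonpositive term $-\tfrac12\int_M|\mathring{r}|^2$, one obtains $4\pi^2(2\chi+3\tau)\leq \tfrac{Y([g])^2}{24}+2\int_M|W_+|^2 \leq 3\int_M|W_+|^2\,d\mu$, which is the claimed bound.

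For the equality case, both inequalities in the last chain must be equalities, forcing first $\mathring{r}\equiv 0$ (so $g$ is Einstein) and then $Y([g])^2=24\int_M|W_+|^2$, which in turn forces equality in Cauchy--Schwarz (so $|W_+|$ is a constant multiple of $|\omega|$), that $u=|\omega|^{1/2}$ is a Yamabe minimizer, and the pointwise equality $\langle W_+(\omega),\omega\rangle=\sqrt{2/3}\,|W_+|\,|\omega|^2$. An Einstein metric with $b_+\neq 0$ is not conformal to the round $S^4$, so by Obata's theorem it is the unique constant-scalar-curvature metric in its class; hence the Yamabe minimizer is $u\equiv\text{const}$, so $|\omega|$ is constant, $\nabla u=0$, and the Weitzenb\"ock identity collapses to $\nabla^*\nabla\omega=0$, giving $\nabla\omega=0$. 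The equality in the eigenvalue estimate then says $W_+$ has eigenvalues $(\tfrac{s}{6},-\tfrac{s}{12},-\tfrac{s}{12})$ with $\omega$ spanning the top eigenspace; normalizing so that $|\omega|\equiv\sqrt2$, the parallel form $\omega$ is the K\"ahler form of a parallel $g$-compatible complex structure, so $g$ is K\"ahler, hence K\"ahler--Einstein with $s>0$, i.e.\ $\lambda>0$. Conversely, for a K\"ahler--Einstein metric with $\lambda>0$ the K\"ahler form is harmonic, self-dual, parallel, of constant norm, and an eigenvector of $W_+$ for the eigenvalue $\tfrac{s}{6}$, so every inequality above is saturated; this converse check is routine, and the only place requiring real care in the equality discussion is the identification of the K\"ahler structure from the eigenvalue and parallelism conditions.
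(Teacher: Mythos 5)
The paper does not actually prove this statement: it is quoted as Gursky's theorem, supported only by the citation \cite{gursky}, so there is no in-paper argument to compare yours against. Judged on its own terms, your proof is correct, and it is in essence Gursky's original argument: the chain consisting of the Weitzenb\"ock formula for the harmonic self-dual form, the spectral estimate $W_+(\omega,\omega)\le\sqrt{2/3}\,|W_+|\,|\omega|^2$, the refined Kato inequality with constant $3/2$, the substitution $u=|\omega|^{1/2}$ (indeed the unique exponent that reassembles the conformal Laplacian $\Delta+\tfrac{s}{6}$), the Yamabe quotient plus Cauchy--Schwarz, and finally Gauss--Bonnet evaluated on the Yamabe minimizer, where $\int s^2\,d\mu=Y([g])^2$, is exactly the standard route to $Y([g])^2\le 24\int|W_+|^2\,d\mu$ and thence to the stated bound; your equality analysis (Obata forcing $u$ constant, hence $\nabla\omega=0$ and a parallel K\"ahler structure with $W_+$-spectrum $(\tfrac{s}{6},-\tfrac{s}{12},-\tfrac{s}{12})$) is likewise the standard one. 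The only genuinely delicate point is the one you flagged: the differential inequality for $u$ lives a priori only off the zero set of $\omega$, and one must justify the integrated inequality $\int(|\nabla u|^2+\tfrac{s}{6}u^2)\,d\mu\le\sqrt{2/3}\int|W_+|u^2\,d\mu$ globally; the regularization $u_\varepsilon=(|\omega|^2+\varepsilon)^{1/4}$ does work and is how this is handled in the literature. It is worth contrasting this with the machinery the paper develops in Lemma \ref{key}: there the same Weitzenb\"ock identity and the same spectral estimate appear, but in the conformal gauge $|\omega|\equiv\sqrt2$ rather than the Yamabe gauge; that choice trades the refined Kato inequality for the exact identity $s^*-s=|\nabla\omega|^2$ and trades $Y([g])$ for $c_1\cdot[\omega]$, which is precisely why the paper's estimate applies to symplectic-type classes of arbitrary Yamabe sign, while Gursky's requires $Y([g])>0$ but imposes no nondegeneracy on $\omega$.
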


 Of course, this does not quite answer the question, because the requirement that $[g]$ have positive
 Yamabe constant excludes ``most'' conformal classes on $M$. In this article, we will 
push our understanding of the problem in a new direction by
 showing that  
 $\lambda > 0$  K\"ahler-Einstein metrics also minimize the
 Weyl functional for  a different 
 large open set of conformal classes, most of which have {\em negative}
 Yamabe constant. We will then extend this result to Einstein metrics which are merely Hermitian, rather than K\"ahler.  
 
 If a compact complex surface $(M,J)$ admits an Einstein metric $g$ which is Hermitian
 with respect to $J$ and has Einstein constant $\lambda > 0$, then $(M,J)$ has \cite{lebhem}
 ample anti-canonical line bundle $K^{-1}$,  often abbreviated as  $c_1 >0$.
 Complex surfaces
 with $c_1 >0$ are called {\em del Pezzo surfaces} \cite{delpezzo,cubic}; 
 they are precisely the Fano manifolds
 of complex dimension $2$. Every del Pezzo surface conversely admits 
 \cite{chenlebweb,lebhem10,sunspot,tian} a Hermitian, $\lambda >0$  Einstein metric which is compatible with the specified complex structure, and this metric is  unique \cite{lebuniq}
 up to complex automorphisms and rescalings. Any del Pezzo surface is biholomorphic  either to 
 $\CP_1 \times \CP_1$ or to a blow-up $\CP_2\# k \overline{\CP}_2$ of the complex projective plane at $k$ points in general position,  $0\leq k \leq 8$. In most cases, the relevant Einstein metric is actually K\"ahler-Einstein. In fact, there are just two cases
 in which this fails to be true: $\CP_2\#  \overline{\CP}_2$ and $\CP_2\# 2 \overline{\CP}_2$.
 In these exceptional  cases, the Einstein metric is not K\"ahler, but is nonetheless  related to a
 K\"ahler metric by conformal rescaling. 
 
 One important topological property of any del Pezzo surface  $(M,J)$ is that $b_+(M)=1$, so that 
the intersection form
 \begin{eqnarray*}
 H^2(M,\RR) \times H^2(M,\RR)&\longrightarrow&\quad \RR\\
 (\quad [\phi ]\quad , \quad [\psi ]\quad ) \quad &\longmapsto & \int_M \phi \wedge \psi
\end{eqnarray*}
is a Lorentzian inner product. Consequently, the space of self-dual harmonic
$2$-forms is  $1$-dimensional for any Riemannian metric $g$   on a del Pezzo surface
 $M$. However, since  a 
self-dual $2$-form is harmonic iff it is closed, the space of 
self-dual 
$2$-forms depends only on the conformal class $[g]$ of the metric.
Thus, up to multiplication by a non-zero real constant,
there is a unique non-trivial self-dual harmonic $2$-form $\omega$ associated with any 
conformal class $[ g]$. We will say that $[g ]$ is of {\em symplectic type} if this harmonic
self-dual $2$-form satisfies $\omega \neq 0$ at every point of  $M$. When this happens, 
$(M, \omega )$ is a symplectic $4$-manifold, and in particular has a well-defined
first Chern class $c_1 = c_1(M,\omega )$.
   
 \begin{main} \label{cornerstone}
 Let $M$ be the underlying $4$-manifold of a del Pezzo surface.
 Then any conformal class $[g]$ of symplectic type on $M$ satisfies 
 $$\int_M |W_+|^2 d\mu \geq \frac{4\pi^2}{3} \frac{(c_1\cdot [\omega ])^2}{[\omega ]^2},$$
 with equality iff $[g]$ contains a K\"ahler metric $g$ of constant scalar curvature.
 \end{main}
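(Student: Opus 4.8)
The plan is to pass to an almost-K\"ahler gauge, feed the resulting $\mathrm{Spin}^c$ structure into Seiberg--Witten theory, and extract the bound by running the relevant Weitzenb\"ock formulas against the special algebra of the self-dual Weyl curvature of an almost-K\"ahler $4$-manifold.

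\emph{Reduction to the almost-K\"ahler gauge.} Since $[g]$ is of symplectic type, its self-dual harmonic form $\omega$ is nowhere zero; using that the space of self-dual harmonic $2$-forms depends only on $[g]$, I may rescale $g$ inside $[g]$ so that $|\omega|_g\equiv\sqrt2$, making $(M,g,\omega)$ an almost-K\"ahler manifold, with almost-complex structure $J$ and $c_1=c_1(M,\omega)$. Both sides of the asserted inequality are unaffected by this rescaling, since $\int_M|W_+|^2d\mu$ is a conformal invariant and the right-hand side involves only the de Rham class $[\omega]$ and the intersection form; so I work henceforth with this fixed almost-K\"ahler metric. One may assume $c_1\cdot[\omega]>0$, as otherwise the right-hand side is $\le0$; in fact this always holds here, since del Pezzo surfaces are rational.

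\emph{Seiberg--Witten solutions and the estimate.} The almost-complex structure determines a $\mathrm{Spin}^c$ structure $\mathfrak c$ with $c_1(\mathfrak c)=c_1$. Rationality of $M$ allows Taubes' theorem (with the usual care concerning the $b_+=1$ chamber) to furnish irreducible solutions $(\Phi,A)$ of the $\omega$-perturbed Seiberg--Witten equations for all sufficiently large values of the perturbation parameter; for such a solution $F_A^+$ is, modulo the prescribed perturbation term, the quadratic spinor expression $\sigma(\Phi)$, of pointwise norm $\tfrac{1}{2\sqrt2}|\Phi|^2$. Now I would run two Weitzenb\"ock identities in tandem. The one for the twisted Dirac operator $D_A$, with the maximum principle, bounds $\sup_M|\Phi|^2$ in terms of $\min_M s$; and in the limit of large perturbation it forces $|\Phi|^2$ to become asymptotically constant and $F_A^+$ to become asymptotically a fixed multiple of $s\,\omega$. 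The Weitzenb\"ock formula for self-dual $2$-forms, applied to $\psi:=F_A^+$ --- using $dF_A=0$ and the Seiberg--Witten equations to control how far $\psi$ is from being harmonic --- bounds $\int_M\langle\mathcal W_+(\psi),\psi\rangle\,d\mu$ from below in terms of $\int_M s\,|\psi|^2\,d\mu$, the Dirichlet energy of $\psi$, and controllable spinor terms. Combining this with the eigenvalue bound $\langle\mathcal W_+(\psi),\psi\rangle\le\sqrt{\tfrac23}\,|W_+|\,|\psi|^2$ available when $\psi$ is (asymptotically) proportional to $\omega$ on an almost-K\"ahler $4$-manifold, with the Hodge inequality
\[
\int_M|\psi|^2\,d\mu\ \ge\ 4\pi^2\,(c_1^+)^2\ =\ 4\pi^2\,\frac{(c_1\cdot[\omega])^2}{[\omega]^2}
\]
(valid because $c_1^+$ is the orthogonal projection of $c_1$ onto the line spanned by the self-dual class $[\omega]$), and with a Cauchy--Schwarz step, I expect to obtain in the limit
\[
\int_M|W_+|^2\,d\mu\ \ge\ \frac{4\pi^2}{3}\,\frac{(c_1\cdot[\omega])^2}{[\omega]^2}.
\]
The entire difficulty lies in this last step: one must arrange the Weitzenb\"ock manipulations, the Cauchy--Schwarz inequalities and the large-perturbation limit so that the numerical constant comes out to be exactly $4\pi^2/3$ and so that each inequality invoked degenerates to equality under precisely the expected circumstances.

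\emph{The equality case.} If equality holds, unwinding the previous paragraph forces $s$ to be constant, the Dirichlet terms to vanish, and the eigenvalue bound for $\mathcal W_+$ to be saturated pointwise. Interpreting the latter conditions for an almost-K\"ahler metric --- via the standard integral comparison between the scalar and $\ast$-scalar curvatures of an almost-K\"ahler $4$-manifold --- forces $\nabla\omega\equiv0$, so that $g$ is K\"ahler with respect to $J$; being of constant scalar curvature, it is a constant-scalar-curvature K\"ahler metric in $[g]$. Conversely, if $[g]$ contains a K\"ahler metric $g$ of constant scalar curvature $s$, its K\"ahler form is a constant multiple of $\omega$, and for any K\"ahler metric one has $|W_+|^2\equiv s^2/24$, while in complex dimension $2$ one has $c_1\cdot[\omega]=\tfrac{1}{4\pi}\int_M s\,d\mu$ and $[\omega]^2=2\operatorname{Vol}$; with $s$ constant these give $\int_M|W_+|^2\,d\mu=\tfrac{s^2}{24}\operatorname{Vol}=\tfrac{4\pi^2}{3}(c_1\cdot[\omega])^2/[\omega]^2$, so equality holds. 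This establishes the ``if and only if.''
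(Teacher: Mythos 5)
There is a genuine gap, and it sits exactly where you place it: ``the entire difficulty lies in this last step'' is an accurate self-assessment, because the step you defer is the theorem. The Seiberg--Witten Weitzenb\"ock machinery you invoke (Dirac Weitzenb\"ock plus the maximum principle, then the Weitzenb\"ock formula for self-dual $2$-forms applied to $F_A^+$) is the engine behind estimates of the form $\int_M s^2\,d\mu\geq 32\pi^2(c_1^+)^2$ and, in refined versions, $\int_M\bigl(s-\sqrt{6}|W_+|\bigr)^2d\mu\geq 72\pi^2(c_1^+)^2$; in all of these the scalar curvature enters with the dominant weight and cannot be discarded, so no rearrangement of Cauchy--Schwarz steps will isolate a lower bound on $\int_M|W_+|^2d\mu$ alone with the constant $4\pi^2/3$. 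The paper does not use Seiberg--Witten solutions for the estimate at all. Instead it exploits the almost-K\"ahler gauge pointwise: the Weitzenb\"ock formula applied to the harmonic form $\omega$ itself gives $s^*-s=|\nabla\omega|^2$, whence $W_+(\omega,\omega)\geq\varsigma/3$ for the Hermitian scalar curvature $\varsigma=(s+s^*)/2$, and the trace-free algebra of $W_+$ on $\Lambda^+$ then yields the pointwise inequality $2\sqrt{6}\,|W_+|\geq\varsigma$, with equality iff the metric is K\"ahler with $s\geq 0$. Integrating against the Chern--Weil identity $\int_M\varsigma\,d\mu=4\pi\,c_1\cdot[\omega]$ (the curvature of the natural connection on $K^{-1}$, no monopole equations needed) gives $\int_M|W_+|\,d\mu\geq\sqrt{2/3}\,\pi\,c_1\cdot[\omega]$, and one application of Cauchy--Schwarz finishes. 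Your equality discussion and the converse computation for constant-scalar-curvature K\"ahler metrics are fine, but they hang on an inequality you have not established.

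A second, independent error: you assert that one may assume $c_1\cdot[\omega]>0$ ``as otherwise the right-hand side is $\le 0$,'' and that positivity ``always holds here, since del Pezzo surfaces are rational.'' Both claims are wrong. The right-hand side is $(c_1\cdot[\omega])^2/[\omega]^2$, a square over a positive number, so it is strictly positive whenever $c_1\cdot[\omega]<0$; the case of negative pairing is not vacuous, and it cannot be cured by replacing $\omega$ with $-\omega$, since that flips $c_1(M,\omega)$ as well and leaves the sign of $c_1\cdot[\omega]$ unchanged. The sign is a genuine invariant of the conformal class, and the $L^1$ bound $\int_M|W_+|\,d\mu\geq\sqrt{2/3}\,\pi\,c_1\cdot[\omega]$ only squares to the desired $L^2$ bound when that sign is nonnegative. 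Establishing $c_1\cdot[\omega]>0$ for every symplectic-type conformal class on a $4$-manifold with $2\chi+3\tau\geq 0$ is precisely the content of the paper's Lemma \ref{limpid}, and that is the one place where Seiberg--Witten theory genuinely enters: Taubes' nonvanishing for the chamber containing $(g,t\omega)$, $t\gg 0$, combined with the existence of positive-scalar-curvature metrics on these manifolds, forces $(\tilde{g},0)$ into the future chamber and hence forces $c_1$ to be future-pointing. In short, you have deployed Seiberg--Witten theory where it is not needed and cannot deliver the estimate, and omitted it where it is indispensable.
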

 
The proof of this result is presented in \S \ref{rrs} below, where we also observe that it immediately implies  an  interesting  variant of Gursky's Theorem:
 
 \begin{main} \label{keystone}
  Let $M$ be the underlying smooth oriented $4$-manifold of a del Pezzo surface.
 Then 
  any conformal class $[g]$ of symplectic type on $M$ satisfies 
  $$\int_M |W_+|^2 d\mu \geq \frac{4\pi^2}{3} (2\chi + 3\tau )  (M) ,$$
 with equality iff $[g]$ contains a K\"ahler-Einstein metric $g$.
 \end{main}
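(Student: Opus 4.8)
The plan is to deduce Theorem~\ref{keystone} from Theorem~\ref{cornerstone} by showing that the right-hand side $\frac{4\pi^2}{3}\frac{(c_1\cdot[\omega])^2}{[\omega]^2}$ of Theorem~\ref{cornerstone} is always $\geq \frac{4\pi^2}{3}(2\chi+3\tau)(M)$, with equality exactly when $[\omega]$ is proportional to $c_1$. First I would recall that for the underlying $4$-manifold $M$ of a del Pezzo surface one has $2\chi+3\tau = c_1^2(M,J)$, where $c_1(M,J)$ is the first Chern class of the complex structure; since $b_+(M)=1$, the intersection form is Lorentzian of signature $(1,b_-)$, and because $(M,J)$ is del Pezzo we have $c_1^2 = c_1(M,J)^2 > 0$, so $c_1(M,J)$ is a timelike class. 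The key point is then that for a conformal class $[g]$ of symplectic type, the self-dual harmonic form $\omega$ represents a class $[\omega]$ with $[\omega]^2 > 0$ (self-dual harmonic forms have positive square when $b_+=1$), i.e.\ $[\omega]$ is also timelike; moreover $c_1 = c_1(M,\omega)$ agrees, up to sign, with $\pm c_1(M,J)$ for some complex structure $J$ compatible with the orientation, so that $c_1^2 = (2\chi+3\tau)(M)$ as well.

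The heart of the argument is the reverse Cauchy--Schwarz inequality in Lorentzian signature: if $v,w$ are two timelike vectors in a Lorentzian inner product space with $v\cdot v>0$ and $w\cdot w>0$, then $(v\cdot w)^2 \geq (v\cdot v)(w\cdot w)$, with equality iff $v$ and $w$ are parallel. Applying this with $v = c_1$ and $w = [\omega]$ in $H^2(M,\RR)$ gives
$$
(c_1\cdot[\omega])^2 \;\geq\; c_1^2\,[\omega]^2 \;=\; (2\chi+3\tau)(M)\,[\omega]^2,
$$
hence $\frac{(c_1\cdot[\omega])^2}{[\omega]^2} \geq (2\chi+3\tau)(M)$, which combined with Theorem~\ref{cornerstone} immediately yields the desired lower bound on $\int_M |W_+|^2\, d\mu$. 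One subtlety to check is that $c_1$ and $[\omega]$ are genuinely timelike (not merely non-spacelike) and lie in the same component of the timelike cone, so that the equality case of the Lorentzian Cauchy--Schwarz inequality applies cleanly; this should follow from $[\omega]^2>0$ together with $c_1\cdot[\omega] > 0$, which in turn holds because for a constant-scalar-curvature K\"ahler metric in $[g]$ the symplectic form pairs positively with $c_1$, and this sign is locally constant on the connected set of symplectic-type conformal classes.

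For the equality discussion: by Theorem~\ref{cornerstone}, equality in $\int_M|W_+|^2 \geq \frac{4\pi^2}{3}(2\chi+3\tau)$ forces equality in Theorem~\ref{cornerstone}, so $[g]$ contains a constant-scalar-curvature K\"ahler metric $g$; it also forces equality in the Lorentzian Cauchy--Schwarz step, so $[\omega]$ is a positive multiple of $c_1$. I would then invoke the standard fact that a constant-scalar-curvature K\"ahler metric whose K\"ahler class is proportional to $c_1$ is necessarily K\"ahler--Einstein (the constant-scalar-curvature equation together with $[\omega] = \text{const}\cdot c_1$ forces the Ricci form to be a constant multiple of $\omega$). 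Conversely, any K\"ahler--Einstein metric with $\lambda>0$ on $M$ is of symplectic type (its K\"ahler form is parallel, hence nowhere zero) and its K\"ahler class is a multiple of $c_1$, so it saturates both inequalities. The main obstacle I anticipate is not the Lorentzian linear algebra, which is routine, but rather making sure the identification $c_1(M,\omega) = \pm c_1(M,J)$ — and hence $c_1^2 = 2\chi+3\tau$ — is justified for an \emph{arbitrary} symplectic-type conformal class, not just one containing a K\"ahler metric; this requires knowing that any such $\omega$ is tamed by (or compatible with, up to deformation) a complex structure with the given Chern numbers, which on a del Pezzo background should follow from the classification of symplectic structures on rational surfaces, or can be finessed by noting that $c_1(M,\omega)^2$ is a topological quantity that only depends on the homotopy class of the almost-complex structure determined by $\omega$ and the orientation.
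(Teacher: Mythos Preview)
Your approach is essentially identical to the paper's: deduce Theorem~\ref{keystone} from Theorem~\ref{cornerstone} via the reverse Cauchy--Schwarz inequality for the Lorentzian intersection form, then argue that a constant-scalar-curvature K\"ahler metric with K\"ahler class proportional to $c_1$ has harmonic Ricci form proportional to $\omega$ and is therefore K\"ahler--Einstein. The worries you raise at the end are non-issues: the identity $c_1(M,\omega)^2 = 2\chi + 3\tau$ holds automatically for \emph{any} almost-complex structure on a compact oriented $4$-manifold (it is the relation $c_1^2 = p_1 + 2c_2$), and the positivity $c_1\cdot[\omega]>0$ is already baked into the paper's proof of Theorem~\ref{cornerstone} via Lemma~\ref{limpid}, so you may simply invoke it.
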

 
 However, the lower bound occurring in Theorem \ref{cornerstone} is stronger than 
 that in Theorem \ref{keystone}. As we will see in \S \ref{period}, this 
 leads to some interesting constraints on the geometry of Einstein metrics.

  The condition that a conformal class be of symplectic type is {\em open}  \cite{kohonda,lcp2} 
  in  the $C^{2,\alpha}$
  topology; in this sense, the condition is analogous to the Gursky's condition that the Yamabe constant  be positive. 
  Nonetheless,   ``most''  conformal classes of symplectic type have $Y([g]) < 0$;
  see \cite{jongsuak,jongsuak2}, 
  or Proposition \ref{neg} below. 
  Thus, Gursky's result  and Theorem \ref{cornerstone}  apply to rather different 
  open sets in the space of conformal classes. This might be interpreted as supporting the conjecture that,  in real dimension $4$,  positive K\"ahler-Einstein metrics are absolute minimizers of the Weyl functional among {\em all} conformal classes on the fixed manifold. 
  
 Gursky's result and Theorem \ref{cornerstone} provide us with a good picture of  
  the behavior of   the Weyl 
 functional  in the vicinity of an Einstein metric which is {\em K\"ahler}. 
  However, there are two del Pezzo surfaces, namely the one-  and two-point blow-ups of 
  $\CP_2$,  which {\em do not}
admit K\"ahler-Einstein metrics. In these cases, however, there  still  exist Einstein metrics 
which are {\em Hermitian} with respect to the complex structure. Moreover, such metrics 
are automatically conformally K\"ahler \cite{lebhem}, so their conformal classes are necessarily 
of symplectic type. It thus seems irresistible to ask whether these preferred Einstein metrics are minimizers in the same sense as their 
K\"ahler-Einstein cousins. 

Our present result  in this direction 
involves an additional assumption concerning  the 
conformal isometry group.  
The del Pezzo surfaces  $\CP_2\# \overline{\CP}_2$ and   $\CP_2\# 2\overline{\CP}_2$ are both 
{\em toric}, in the sense
that their complex automorphism groups contain a $2$-torus $T^2 = S^1 \times S^1$. Moreover, 
 any Einstein Hermitian metric on one of these manifolds is necessarily 
 toric, in the sense that it is isometric to a metric which is invariant under this fixed $2$-torus action. 
 This makes it natural to consider the conformal classes of these metrics as members of 
 the  symplectic conformal classes which are $T^2$-invariant. In this narrower context, we 
 can then show that  these special Einstein metrics do indeed minimize the Weyl functional:

 \begin{main} \label{capstone}
 Let $M$ be the underlying $4$-manifold of a toric del Pezzo surface, and let 
 $g$ be an Einstein metric on $M$ which is {\em Hermitian} 
  and invariant under the fixed torus action. 
 Then the conformal class $[g]$ minimizes the Weyl functional among symplectic conformal classes which 
 are invariant under the torus action. Moreover, up to  diffeomorphism, $[g]$  is 
 the  {\em unique} such minimizer. \end{main}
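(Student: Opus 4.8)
The plan is to derive Theorem~\ref{capstone} from Theorem~\ref{cornerstone}, a torus-equivariant sharpening of the Seiberg--Witten curvature estimates behind it, and the extremal-K\"ahler geometry of toric del Pezzo surfaces. Since $\int_M |W_+|^2\, d\mu = \tfrac12\big(\mathscr{W}([g]) + 12\pi^2\tau(M)\big)$ depends only on $[g]$, minimizing $\mathscr{W}$ over the torus-invariant symplectic conformal classes is the same as minimizing $[g]\mapsto \int_M |W_+|^2\, d\mu$ over them, and one first wants to identify the conjectural minimum value. By \cite{lebhem} the Einstein metric $g_0$ is conformal to a K\"ahler metric, and by a classical result of Derdzinski this companion metric $g_K$ may be taken invariant under the given torus, with scalar curvature $s_K > 0$, extremal in the sense of Calabi but \emph{not} of constant scalar curvature --- the last because $\Aut_0(M,J)$ is non-reductive for both $M = \CP_2\#\overline{\CP}_2$ and $M = \CP_2\# 2\overline{\CP}_2$. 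As $|W_+|^2\, d\mu$ is a pointwise conformal invariant and every K\"ahler surface satisfies $|W_+|^2 = s^2/24$, the target value is
$$\int_M |W_+|^2_{g_0}\, d\mu_{g_0} \;=\; \frac{1}{24}\int_M s_K^2\, d\mu_K ,$$
which, $g_K$ being non-CSC, is \emph{strictly} larger than $\tfrac{4\pi^2}{3}(2\chi + 3\tau)(M)$; thus Theorem~\ref{capstone} is genuinely stronger than what Theorem~\ref{keystone} would give.

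Next I would dispose of the ``generic'' conformal classes directly from Theorem~\ref{cornerstone}. Let $[g]$ be any symplectic conformal class on $M$, with self-dual harmonic period point $[\omega]$. Since $c_1 = -K$ is ample it lies in the forward light cone, and (replacing $[\omega]$ by $-[\omega]$ if needed) so does $[\omega]$; the wrong-way Cauchy--Schwarz inequality for the Lorentzian intersection form then gives $(c_1\cdot[\omega])^2/[\omega]^2 \geq c_1^2 = (2\chi+3\tau)(M)$, so Theorem~\ref{cornerstone} yields $\int_M |W_+|^2\, d\mu \geq \tfrac{4\pi^2}{3}(c_1\cdot[\omega])^2/[\omega]^2$. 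Moreover this is \emph{strict}, because neither $\CP_2\#\overline{\CP}_2$ nor $\CP_2\# 2\overline{\CP}_2$ admits any constant-scalar-curvature K\"ahler metric (again by non-reductivity of $\Aut_0$, via Lichnerowicz--Matsushima). Consequently, any conformal class whose period ray $\RR^+[\omega]$ is far enough from $\RR^+ c_1$ that $\tfrac{4\pi^2}{3}(c_1\cdot[\omega])^2/[\omega]^2 \geq \int_M |W_+|^2_{g_0}\, d\mu_{g_0}$ already has strictly larger Weyl energy than $[g_0]$; the set of period rays \emph{not} satisfying this is a fixed compact neighborhood of $\RR^+ c_1$, bounded away from the light cone, and only these ``critical'' conformal classes remain to be treated.

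For the critical family I would bring in the torus symmetry. Take the representative of $[g]$ for which $(g,\omega)$ is almost-K\"ahler and torus-invariant; the canonical $\mathrm{Spin}^c$ structure of $(M,\omega)$ has nonzero Seiberg--Witten invariant in the relevant chamber (Taubes), so feeding a torus-invariant weight adapted to the moment map into LeBrun's ``superscaling'' refinement of the Weitzenb\"ock estimate produces a lower bound for $\int_M |W_+|^2\, d\mu$ that is sharp exactly on K\"ahler metrics. In effect this reduces the infimum of the Weyl energy over critical torus-invariant symplectic conformal classes to the infimum of $h \mapsto \tfrac{1}{24}\int_M s_h^2\, d\mu_h$ over torus-invariant K\"ahler metrics $h$ on $M$. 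The latter is the content of the extremal-K\"ahler minimization for toric surfaces underlying \cite{chenlebweb}: writing toric K\"ahler metrics via symplectic potentials on the moment polygon turns it into a convex variational problem whose critical points are precisely the K\"ahler metrics $h$ with $s_h^{-2}h$ Einstein, so the unique minimizer is $g_K$, with minimum value $\tfrac{1}{24}\int_M s_K^2\, d\mu_K = \int_M |W_+|^2_{g_0}\, d\mu_{g_0}$. This proves that $[g_0]$ is an absolute minimizer.

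Uniqueness then follows by running the equality case backwards: a minimizer must lie in the critical family, its period ray is then forced to be $\RR^+[\omega_K]$, equality in the superscaling estimate forces the almost-K\"ahler representative to be K\"ahler and extremal, and the extremal K\"ahler metric in a fixed K\"ahler class is unique up to $\Aut_0(M,J) \subset \mathrm{Diff}(M)$; hence every minimizer is carried to $[g_0]$ by a diffeomorphism. The main obstacle is the step in the third paragraph: Theorem~\ref{cornerstone} alone is \emph{strictly} satisfied on these two manifolds and so cannot see the true minimum, and closing the gap requires both the torus-equivariant superscaling estimate --- which is where the hypothesis that $g$ be \emph{torus-invariant} is used --- and the convexity and properness of the toric Calabi-type functional, which is where the hypothesis that $M$ be \emph{toric} and $g$ be \emph{Hermitian} Einstein enters.
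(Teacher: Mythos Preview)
Your outline has a genuine gap at the decisive step, and the paper's argument is structurally quite different from what you propose.

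The split into ``far'' and ``near'' period rays, with Theorem~\ref{cornerstone} disposing of the former, is fine as bookkeeping but does no real work: the target value $\tfrac{1}{24}\int s_K^2\,d\mu_K$ exceeds $\tfrac{4\pi^2}{3}c_1^2$, and Theorem~\ref{cornerstone} cannot see this excess. Everything hinges on your third paragraph, where you invoke a ``torus-equivariant superscaling refinement of the Weitzenb\"ock estimate'' from Seiberg--Witten theory. No such estimate is available that produces the correct sharp lower bound here. LeBrun's superscaling inequalities bound curvature integrals in terms of $c_1^2$ (or more generally $(c_1\cdot[\omega])^2/[\omega]^2$), not in terms of a quantity that also sees the Futaki invariant; so even an optimal equivariant version would again give only $\tfrac{4\pi^2}{3}c_1^2$ and miss the gap. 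The step ``reduces the infimum \ldots\ to the infimum over torus-invariant K\"ahler metrics'' is therefore unjustified, and without it the argument collapses.

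What the paper actually does avoids Seiberg--Witten entirely and never splits into cases. The key objects are the Hermitian scalar curvature $\varsigma=(s+s^*)/2$ of the almost-K\"ahler representative and its $L^2$-orthogonal projection $F=\thor(\varsigma)$ onto the finite-dimensional space $\mathfrak{K}$ of Hamiltonians of the torus action. Lejmi's generalization of Donaldson's integration-by-parts formula shows that $F$ depends only on the moment polygon, so $\int F^2\,d\mu = 32\pi^2\mathcal{A}([\omega])$, where the \emph{virtual action} $\mathcal{A}([\omega]) = (c_1\cdot[\omega])^2/[\omega]^2 + \tfrac{1}{32\pi^2}\|\mathfrak{F}([\omega])\|^2$ incorporates exactly the Futaki contribution your approach cannot see. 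The pointwise almost-K\"ahler inequality $2\sqrt{6}\,|W_+|\geq\varsigma$ (Lemma~\ref{key}) is then multiplied by $F$, and the crucial non-obvious input is that $F=\thor(\varsigma)\geq 0$ everywhere on any toric del Pezzo (Lemma~\ref{positive}). Integrating and using $\int F\varsigma = \int F^2$ together with Cauchy--Schwarz gives $\int|W_+|^2\,d\mu\geq \tfrac{4\pi^2}{3}\mathcal{A}([\omega])$ uniformly, with equality iff the almost-K\"ahler representative is extremal K\"ahler. One then minimizes $\mathcal{A}$ over the K\"ahler cone (Proposition~\ref{behold}), a finite-dimensional problem handled case by case for the five toric del Pezzos, whose unique critical point is the conformally Einstein class. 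Your sketch also conflates minimizing the Calabi energy within a fixed class with minimizing over classes; the paper separates these cleanly: equality in the curvature estimate forces extremality within the class, and then $\mathcal{A}$ selects the class.
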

 
 The proof of this result is given in \S \ref{hermitian}.  In point of fact, the proof yields  considerably more information
 than is indicated by the mere statement of Theorem \ref{capstone},  and this has
  interesting consequences that  are  then 
 explored in \S \ref{period}. We  then conclude by discussing various questions and speculations
 in \S \ref{speculations}.

\section{Almost-K\"ahler Geometry}

Let $(M,\hat{g})$ be a smooth compact oriented Riemannian $4$-manifold ,
and let $\omega$ be a self-dual harmonic $2$-form on $M$. Suppose, moreover, 
that $\omega\neq 0$ at every point of $M$. Now, any self-dual $2$-form on 
an oriented Riemannian satisfies 
$$
\omega_{ab}\omega^{bc}= - {\textstyle \frac{1}{2}} |\omega|^2 \delta^c_a, ~~ \omega \wedge \omega = |\omega |^2 d\mu,
$$
 so that the closed $2$-form $\omega$ is, in particular,  a symplectic form 
on $M$. However, harmonicity is a conformally invariant condition on middle-dimensional 
forms, and we can obviously 
  conformally rescale the metric $\hat{g}$ to obtain a new 
   metric $g$ such that $|\omega |_{g}\equiv \sqrt{2}$;  the unique such 
${g}$ is explicitly given by 
$${g} =  {\textstyle\frac{1}{\sqrt{2}}}|\omega|_{\hat{g}} \hat{g}.$$
The endomorphism  $J$ of $TM$ then defined by ${J_a}^b=\omega_{ac}g^{cb}$ 
is  an almost-complex structure on $M$, and the relationship between $\omega$, $g$,
and $J$ is that of an almost-Hermitian manifold;  and because $\omega$ is a {\em  closed} $2$-form by hypothesis, one  says that $(M, g, \omega )$ is an {\em almost-K\"ahler manifold}
 \cite{yano}.  In this situation, it is traditional \cite{apodrag,blair} 
to define  the {\em star-scalar curvature} to be 
$$
s^* = 2{\mathcal R} (\omega , \omega )={\textstyle \frac{1}{2}} \mathcal{R}_{abcd}\omega^{ab}\omega^{cd}
$$
where ${\mathcal R}$ is the Riemann curvature tensor, so that, in our $4$-dimensional 
setting
\begin{equation} \label{mocha}
s^* = \frac{s}{3} + 2W_+(\omega , \omega ).
\end{equation}
Since the Weitzenb\"ock formula for self-dual harmonic 2-forms asserts
$$0=\frac{1}{2}\Delta |\omega|^2 + |\nabla \omega|^2 -2 W_+(\omega , \omega ) + \frac{s}{3}|\omega|^2,$$
the fact that $|\omega |^2=\frac{1}{2} \omega_{ab}\omega^{ab} \equiv 2$ implies that
\begin{equation}\label{java}
s^* -s = |\nabla \omega|^2
\end{equation}
and hence that $s^* \geq s$, with equality iff $(M,g,J)$  is a K\"ahler manifold. 

The average 
$$\varsigma = \frac{s^*+s}{2}$$
of the star-scalar and Riemannian scalar curvatures is usually called the 
{\em Hermitian scalar curvature} \cite{apodrag,lejmi} , and 
  will play a central role in our discussion. The importance of this quantity was perhaps
  first discovered by Blair \cite{blair}, although the true depth of its 
  significance has only recently emerged more recently, 
  through   Donaldson's clarification  \cite{donfield,dontor} of a moment-map picture 
first proposed by Fujiki \cite{biqdon,fujukisug}. For our immediate purposes, the
important point is that  the anti-canonical line bundle $L:=K^{-1}=\wedge^2 T^{1,0}$
admits a natural connection whose curvature $F$ has  self-dual part   given \cite{apodrag,lsymp}
by 
$$
F^+ =-i\left( \frac{\varsigma}{4}\omega + [W^+(\omega)]^\perp\right),
$$
where $[W^+(\omega)]^\perp$ is the component of the self-dual $2$-form $W^+(\omega )$
perpendicular to $\omega$. It follows that 
$$ {\textstyle \frac{1}{2}} \varsigma ~d\mu= i F^+ \wedge \omega = i F\wedge \omega $$
and hence that 
\begin{equation}
\label{blair}
\int_M \varsigma ~d\mu = 4\pi c_1 \cdot [\omega ].
\end{equation}
  
\begin{lem} \label{key}
Let $(M^4,g,\omega)$ be an almost K\"ahler manifold. Then $g$ satisfies 
\begin{equation}
\label{lemkey}
|W_+| \geq  \frac{\varsigma}{2\sqrt{6}} 
\end{equation}
with equality everywhere iff $g$ is K\"ahler, with K\"ahler form $\omega$, and scalar curvature $s\geq 0$. 
\end{lem}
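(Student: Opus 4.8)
The plan is to reduce \eqref{lemkey} to two ingredients: a purely linear-algebraic bound showing that $|W_+|$ controls from below the value of $W_+$ in the distinguished self-dual direction $\omega$, and the Weitzenb\"ock identity \eqref{java}, whose only role is to fix the sign of that value. Throughout I would keep the normalization $|\omega|\equiv\sqrt2$ already in force, regard $W_+$ pointwise as a trace-free symmetric endomorphism of the $3$-dimensional space $\Lambda^+$, and use the convention for $|W_+|^2$ (the sum of squares of its three eigenvalues) under which a K\"ahler metric has $W_+$-spectrum $\tfrac s6,-\tfrac s{12},-\tfrac s{12}$ and hence $|W_+|=|s|/2\sqrt6$.

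First I would prove the linear-algebra step. Set $\hat\omega=\omega/\sqrt2$, a unit section of $\Lambda^+$, extend it to a pointwise orthonormal frame $(\hat\omega,e_2,e_3)$ of $\Lambda^+$, and let $a=\langle W_+\hat\omega,\hat\omega\rangle$ be the corresponding diagonal entry. Discarding the off-diagonal entries, using $\mathrm{tr}\,W_+=0$ so the other two diagonal entries sum to $-a$, and applying $x^2+y^2\ge\tfrac12(x+y)^2$, one finds $|W_+|^2\ge a^2+\tfrac12 a^2=\tfrac32 a^2$, i.e.
$$|W_+|\ \ge\ \tfrac{\sqrt6}{2}\,|a|\ \ge\ \tfrac{\sqrt6}{2}\,a,$$
with equality throughout only if $W_+$ has $\hat\omega$ as an eigenvector, the other two eigenvalues coincide (both $=-a/2$), and $a\ge 0$.

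Next I would identify $a$ with curvature. Because $|\omega|^2=2$, unwinding the definitions gives $a=\tfrac12 W_+(\omega,\omega)$, so \eqref{mocha} becomes $s^*=\tfrac s3+4a$ and therefore $\varsigma=\tfrac12(s+s^*)=2a+\tfrac{2s}{3}$. Here is where \eqref{java} enters: $s^*-s=|\nabla\omega|^2\ge 0$ gives $\tfrac{2s}{3}\le 4a$, and substituting into the last expression yields $\varsigma\le 6a$, i.e. $a\ge\varsigma/6$. Combined with the previous step this gives
$$|W_+|\ \ge\ \tfrac{\sqrt6}{2}\,a\ \ge\ \tfrac{\sqrt6}{2}\cdot\frac{\varsigma}{6}\ =\ \frac{\varsigma}{2\sqrt6},$$
which is \eqref{lemkey}; the estimate is vacuous where $\varsigma<0$, but the chain is valid in all cases since $|W_+|\ge 0$ and $|a|\ge a$.

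Finally, for the equality discussion I would trace the chain backwards. Equality in \eqref{lemkey} forces $\varsigma\ge 0$, $a=\varsigma/6$, $a\ge 0$, and the eigenvalue picture above; feeding $a=\varsigma/6$ into $\varsigma=2a+\tfrac{2s}{3}$ gives $s=6a=\varsigma$, hence $s^*=s$, so \eqref{java} forces $\nabla\omega\equiv 0$ wherever equality holds — so if it holds everywhere, $(M,g,\omega)$ is K\"ahler with K\"ahler form $\omega$ and $s=\varsigma\ge 0$. The converse is immediate from $s^*=s$ (hence $\varsigma=s$) together with the standard K\"ahler-surface value $|W_+|=|s|/2\sqrt6$. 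The proof is short, and the only genuinely delicate point is the bookkeeping of normalizations — verifying that the conventions behind \eqref{mocha}, \eqref{java}, the identity $a=\tfrac12 W_+(\omega,\omega)$, and $|W_+|^2=\sum\lambda_i^2$ are mutually consistent, which is pinned down by the K\"ahler model case; conceptually, the one thing to notice is that the nonnegativity of $|\nabla\omega|^2$ in \eqref{java} is exactly the input needed to control the sign of the relevant diagonal entry of $W_+$.
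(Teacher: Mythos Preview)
Your proof is correct and follows essentially the same approach as the paper's: use \eqref{mocha} and \eqref{java} to obtain $W_+(\omega,\omega)\ge\varsigma/3$, then invoke the trace-freeness of $W_+$ for the linear-algebra bound, and read off the equality case. The only cosmetic difference is that the paper routes the linear-algebra step through the largest eigenvalue $\lambda$ of $W_+$ (via $W_+(\omega,\omega)\le 2\lambda$ and $|W_+|\ge\sqrt{3/2}\,\lambda$) rather than bounding $|W_+|$ directly by the diagonal entry $a$ as you do.
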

\begin{proof}
Equations \eqref{mocha} and \eqref{java} tells us that 
$$W_+(\omega , \omega ) = \frac{s^*}{2} - \frac{s}{6} =  \frac{s^*+s}{6} + \frac{1}{3} |\nabla \omega |^2 \geq \frac{s^*+s}{6}= \frac{\varsigma}{3}~,$$
with equality only at points at which $\nabla \omega=0$.  On the other hand, 
if $$\lambda : M\to \RR$$ is the largest  eigenvalue of $W_+: \Lambda^+\to \Lambda^+$ at each point of $M$, we have
 $$
 W_+ (\omega , \omega ) \leq \lambda |\omega|^2 = 2\lambda,
 $$
 whereas 
 $$|W_+| \geq \sqrt{\frac{3}{2}} \lambda$$
 because $W_+$ is trace-free. Thus 
 $$
 |W_+| \geq \frac{1}{2} \sqrt{\frac{3}{2}} W_+ (\omega , \omega )  \geq   \frac{\varsigma}{2\sqrt{6}} ~,
 $$
 and equality can  occur only if $(M,g,\omega)$ is K\"ahler, with scalar curvature $\geq 0$. 
 
 Conversely, any 
 $4$-dimensional K\"ahler manifold satisfies $|W_+|= |s|/2\sqrt{6}$ and $s=\varsigma$. Such a 
 manifold therefore  saturates the inequality everywhere 
 iff its scalar curvature $s$ is non-negative. 
\end{proof}

Combining Lemma \ref{key} with \eqref{blair} now immediately yields 

\begin{lem} \label{lock}
Let $(M^4,g,\omega)$ be an almost K\"ahler manifold. Then $g$ satisfies 
\begin{equation}
\label{lemlock}
\int_M |W_+| ~d\mu \geq  \sqrt{\frac{2}{3}} ~\pi~c_1\cdot [\omega ]
\end{equation}
with equality  iff $g$ is K\"ahler, with K\"ahler form $\omega$, and scalar curvature $s\geq 0$. 
\end{lem}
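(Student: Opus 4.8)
The plan is to obtain \eqref{lemlock} by integrating the pointwise estimate of Lemma \ref{key} over $M$ and then substituting the cohomological identity \eqref{blair}. Since Lemma \ref{key} asserts that $|W_+| \geq \varsigma/(2\sqrt{6})$ holds at every point of the almost-K\"ahler manifold $(M^4,g,\omega)$, integrating both sides against the volume form $d\mu_g$ gives $\int_M |W_+|\, d\mu \geq \tfrac{1}{2\sqrt{6}}\int_M \varsigma\, d\mu$. Feeding in \eqref{blair}, which says $\int_M \varsigma\, d\mu = 4\pi\, c_1\cdot[\omega]$, the right-hand side becomes $\tfrac{4\pi}{2\sqrt{6}}\, c_1\cdot[\omega] = \tfrac{2\pi}{\sqrt{6}}\, c_1\cdot[\omega] = \sqrt{\tfrac{2}{3}}\,\pi\, c_1\cdot[\omega]$, which is exactly the claimed bound. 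No analytic estimates beyond the two cited results are needed; the entire content has already been packaged into Lemmas \ref{key} and the derivation of \eqref{blair}.

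For the equality clause I would argue in both directions. If $g$ is K\"ahler with K\"ahler form $\omega$ and $s \geq 0$, then, as recorded in the proof of Lemma \ref{key}, a $4$-dimensional K\"ahler metric satisfies $|W_+| = |s|/2\sqrt{6}$ and $\varsigma = s$, so $|W_+| \equiv \varsigma/(2\sqrt{6})$ on all of $M$ and the integrated inequality is saturated. Conversely, suppose equality holds in \eqref{lemlock}. Then the nonnegative function $|W_+| - \varsigma/(2\sqrt{6})$ has vanishing integral, and since it is continuous (because $g$ is smooth, $W_+$ and $\varsigma$ are continuous), it must vanish identically; that is, equality holds pointwise everywhere in \eqref{lemkey}. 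Lemma \ref{key} then forces $(M,g,\omega)$ to be K\"ahler with K\"ahler form $\omega$ and $s \geq 0$, as desired.

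There is essentially no obstacle here: the only point that warrants a moment's attention — and it is genuinely routine — is the step from "equality of integrals" to "equality pointwise", where one must note that $|W_+| - \varsigma/(2\sqrt{6})$ is not merely nonnegative but continuous, so that an integral-zero conclusion upgrades from almost everywhere to everywhere, which is what Lemma \ref{key} needs to deliver the rigid conclusion. Everything else is immediate from the two results already in hand.
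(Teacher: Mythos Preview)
Your proof is correct and is exactly the argument the paper has in mind: the paper simply states that Lemma \ref{lock} follows ``immediately'' by combining the pointwise estimate of Lemma \ref{key} with the cohomological identity \eqref{blair}, and you have spelled out precisely those details, including the equality case via continuity of $|W_+|-\varsigma/(2\sqrt{6})$.
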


\section{Rational and Ruled Surfaces} \label{rrs}

As was first observed by Taubes \cite{taubes2}, very few smooth compact $4$-manifolds admit symplectic structures
such that $c_1\cdot [\omega ] > 0$.  Indeed, using further results of Taubes \cite{taubes3},
Lalonde-McDuff  \cite{lalmcd} and  Liu \cite{liu1}   were later able to show that any such 
$4$-manifold is symplectomorphic to a rational or ruled complex surface, equipped with the
symplectic structure associated with  some choice of K\"ahler metric. Recall that 
a {\em minimal} ruled surface is  the total space of a holomorphic $\CP_1$-bundle
over a compact complex curve, and that, more generally,  a complex surface
is said to be {\em  ruled} iff it is an iterated blow-up of a minimal ruled surface. A complex surface
is said to be {\em rational} if it is obtained from $\CP_2$ by a sequence of blow-ups and
blow-downs. Thus, a surface is rational or ruled surface iff it is either $\CP_2$ or ruled. 
These are exactly   \cite{bpv,GH} the compact complex surfaces of K\"ahler type with Kodaira dimension $-\infty$. 
Every rational or ruled surface does admit K\"ahler classes such that $c_1\cdot [\omega ] > 0$, 
so the result of Lalonde-McDuff/Liu is sharp. 

If $M$ is the underlying smooth oriented $4$-manifold of a  rational or ruled complex surface,
then 
$b_+(M)=1$.  Consequently, every  conformal class $[g]$  on such an $M$ has an associated
self-dual harmonic $2$-form $\omega$, which is unique up to an overall multiplicative constant. 
In these circumstances, we will say that $[g]$ is of {\em symplectic type} if $\omega$ is everywhere
non-zero; this is an open condition on the conformal class. A choice of symplectic conformal class 
$[g]$, together
with a self-dual hamonic form $\omega$,
 determine an associated almost-complex structure $J$, and thus a 
first Chern class $c_1(M, \omega )$.  It should be emphasized that when 
we say that a conformal class of symplectic type satisfies $c_1\cdot
[\omega ] > 0$, the first Chern class $c_1$ appearing in this expression is always understood to denote $c_1(M, \omega )$; for example, if we replace $\omega$ with $-\omega$, we correspondingly  replace
$c_1$ with $-c_1$, so that the sign of  $c_1\cdot [\omega ]$ remains unchanged. Also note that while self-diffeomorphisms of $M$ can usually be used to produce many different connected components  in the space of conformal structures of symplectic type, Lalonde-McDuff/Liu guarantees that 
any two connected components are in fact related  by some diffeomorphism. In particular, we may 
choose to think of $c_1 (M, \omega )$ as the standard first Chern class of a fixed complex
structure, as long as we are willing to pay the price of sometimes moving a given conformal class 
$[g]$ by a diffeomorphism 
to put it in the connected component that contains the conformal classes of K\"ahler metrics.

 \begin{prop}\label{one}
Let $M$ be the underlying $4$-manifold of a rational or ruled complex surface, and let $[g]$ be a conformal class on $M$ of symplectic type, with 
associated symplectic form $\omega$. If $c_1\cdot [\omega ] \geq 0$, then the Weyl curvature of 
$[g]$ satisfies 
\begin{equation}
\label{proper}
 \int_M |W_+|^2 d\mu  \geq \frac{4\pi^2 }{3} \frac{ (c_1\cdot  [\omega])^2}{[\omega ]^2}
\end{equation}
with equality iff the conformal class $[g]$ contains a {\em K\"ahler metric} $g$ of constant, non-negative  scalar curvature. 
 \end{prop}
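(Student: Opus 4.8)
The plan is to derive \eqref{proper} by combining the pointwise bound of Lemma~\ref{key} with a Cauchy--Schwarz step and then identifying the resulting topological quantity $\int_M\varsigma\,d\mu = 4\pi\,c_1\cdot[\omega]$ from \eqref{blair}. First I would apply Cauchy--Schwarz to the $L^2$ inner product $\int_M |W_+|\cdot 1\,d\mu$, but more precisely, to get the sharp constant with the $[\omega]^2$ denominator, I would instead pair $|W_+|$ with a well-chosen test function. The natural choice is to estimate $\int_M \varsigma\,d\mu$ directly: from \eqref{lemkey} we have $\varsigma \leq 2\sqrt{6}\,|W_+|$ pointwise \emph{when $\varsigma\geq 0$}, but since $\varsigma$ may change sign, the clean route is to write, using $|\omega|^2\equiv 2$,
$$
4\pi\,c_1\cdot[\omega] = \int_M \varsigma\,d\mu \leq \int_M 2\sqrt{6}\,|W_+|\,d\mu,
$$
which is exactly Lemma~\ref{lock}; this is valid unconditionally because $\varsigma \leq 2\sqrt 6 |W_+|$ holds pointwise regardless of sign (as $|W_+|\geq 0$). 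So the starting point is really Lemma~\ref{lock}: $\int_M |W_+|\,d\mu \geq \sqrt{2/3}\,\pi\,c_1\cdot[\omega]$.

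Next, to upgrade from the $L^1$ bound to the $L^2$ bound with the correct normalization, I would apply Cauchy--Schwarz in the form
$$
\left(\int_M |W_+|\,d\mu\right)^2 \leq \left(\int_M |W_+|^2\,d\mu\right)\left(\int_M 1\,d\mu\right) = \mathrm{Vol}(M,g)\int_M |W_+|^2\,d\mu.
$$
Since the metric $g$ in the almost-K\"ahler normalization satisfies $|\omega|_g\equiv\sqrt2$, we have $\omega\wedge\omega = |\omega|^2 d\mu = 2\,d\mu$, hence $\mathrm{Vol}(M,g) = \tfrac12\int_M\omega\wedge\omega = \tfrac12[\omega]^2$. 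Combining with Lemma~\ref{lock} (and using $c_1\cdot[\omega]\geq 0$, so that squaring the inequality is legitimate):
$$
\frac{2}{3}\pi^2(c_1\cdot[\omega])^2 \leq \left(\int_M|W_+|\,d\mu\right)^2 \leq \frac{[\omega]^2}{2}\int_M|W_+|^2\,d\mu,
$$
which rearranges to \eqref{proper}. Note that the conformal rescaling within $[g]$ does not affect $\int_M|W_+|^2 d\mu$ (it is conformally invariant in dimension $4$), nor $[\omega]$, nor $c_1$, so passing to the almost-K\"ahler representative is harmless.

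For the equality case, I would trace back through the two inequalities used. Equality in Cauchy--Schwarz forces $|W_+|$ to be (almost everywhere, hence by continuity everywhere) constant. Equality in Lemma~\ref{lock}/Lemma~\ref{key} forces $g$ to be K\"ahler with K\"ahler form $\omega$ and $s\geq 0$; combined with $|W_+|$ constant and the K\"ahler identity $|W_+| = |s|/2\sqrt6$, this forces $s$ to be a nonnegative constant. Conversely, a constant-scalar-curvature K\"ahler metric with $s\geq 0$ saturates Lemma~\ref{key} pointwise and makes $|W_+|$ constant, hence gives equality throughout; one should also check $s\geq 0$ is automatic here, since on a rational or ruled surface $c_1\cdot[\omega]>0$ forces the total scalar curvature $\int s\,d\mu = \int\varsigma\,d\mu = 4\pi c_1\cdot[\omega]$ to be positive, so a cscK metric in such a class has $s>0$; the degenerate case $c_1\cdot[\omega]=0$ would force $s\equiv 0$, but then the right side of \eqref{proper} vanishes and one should note a scalar-flat K\"ahler metric still gives equality (both sides zero). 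The main subtlety to handle carefully is precisely this sign bookkeeping around $c_1\cdot[\omega]=0$ versus $>0$, and making sure the equality characterization stated ("constant, non-negative scalar curvature") is exactly what the two-step argument yields; the inequalities themselves are routine once Lemmas~\ref{key} and~\ref{lock} are in hand.
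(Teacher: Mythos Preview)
Your proposal is correct and follows essentially the same route as the paper: pass to the almost-K\"ahler representative, invoke Lemma~\ref{lock} to bound $\int_M |W_+|\,d\mu$ from below by $\sqrt{2/3}\,\pi\,c_1\cdot[\omega]$, apply Cauchy--Schwarz with the volume $[\omega]^2/2$, and trace the equality case back through the two inequalities using $|W_+|=|s|/2\sqrt{6}$ on K\"ahler manifolds. Your additional remarks on the converse direction and the $c_1\cdot[\omega]=0$ edge case are fine elaborations but not structurally different from what the paper does.
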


 \begin{proof}  Since $\int |W_+|^2 d\mu$ is conformally invariant, we may choose to conformally
 rescale our given metric  in any convenient  manner  
 before evaluating the integral. Since our hypotheses imply that the given metric
 is conformal to an almost-K\"ahler metric, we may therefore assume henceforth, without loss of generality,
 that $g$ is actually almost-K\"ahler.

However, Lemma \ref{lock} then tells us that this conformal representative $g$ satisfies 
 $$\int |W_+|~d\mu \geq  \sqrt{\frac{2}{3}} \pi ~c_1\cdot [\omega],$$
 with equality iff $g$ is K\"ahler with $s\geq 0$. Applying the Cauchy-Schwarz inequality, we therefore have 
$$ V^{1/2}\left( \int_M |W_+|^2 d\mu\right)^{1/2} \geq  \sqrt{\frac{2}{3}}\pi ~ c_1\cdot [\omega] ~,
$$
where $V= [\omega ]^2/2$ is the volume of the almost-K\"ahler manifold $(M,g,\omega )$; 
moreover,  equality 
would now also imply that $|W_+|$ is constant.  Since any K\"ahler metric satisfies
$|W_+|= |s|/2\sqrt{6}$, this shows that 
$$
\left( \int_M |W_+|^2 d\mu\right)^{1/2} \geq  \frac{2\pi~ c_1\cdot [\omega]}{\sqrt{3 ~[\omega ]^2}} 
$$
with equality  iff $g$ is a K\"ahler metric for which the scalar curvature $s$ is a non-negative constant. When the right-hand side is non-negative, 
squaring both sides then gives the promised result.  \end{proof}

For most  ruled surfaces, the sign of $c_1\cdot [\omega ]$ will depend on the choice of 
$[g]$. However,  this difficulty  does not occur when  $c_1^2 (M) \geq 0$.
While we could simply quote   Lalonde-McDuff \cite{lalmcd}
or Liu\cite{liu} to establish this, we will instead give a more   elementary proof
that some readers may find  illuminating. 

\begin{lem}
\label{limpid} 
Let $M$ be  the underlying $4$-manifold of a rational or ruled surface with 
$2\chi + 3\tau \geq 0$. Then any 
 conformal class $[g]$  of symplectic type on $M$ satisfies
 $c_1\cdot [\omega ]> 0$. 
\end{lem}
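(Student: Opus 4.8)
The plan is to exploit the sign of $c_1 \cdot [\omega]$ by relating it, via the almost-Kähler structure, to the Hermitian scalar curvature through equation \eqref{blair}, and then to show that the relevant integral cannot be nonpositive when $2\chi + 3\tau \geq 0$. First I would conformally rescale the given metric so that $g$ is almost-Kähler with respect to $\omega$, which is permitted since $[g]$ is of symplectic type; this normalization makes $|\omega|^2 \equiv 2$. By \eqref{blair} we have $4\pi\, c_1\cdot[\omega] = \int_M \varsigma\, d\mu$, so it suffices to prove that $\int_M \varsigma\, d\mu > 0$, i.e.\ that the Hermitian scalar curvature cannot have nonpositive average on a rational or ruled surface with $2\chi + 3\tau \geq 0$.

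The key input is the Seiberg--Witten-type obstruction (or, equivalently in this setting, an argument using the Weitzenböck formula and the fact that rational and ruled surfaces have $b_+ = 1$ and Kodaira dimension $-\infty$): a symplectic $4$-manifold with $c_1 \cdot [\omega] \leq 0$ would have to be ``of general type'' in the symplectic sense, contradicting its being rational or ruled. More concretely, I would argue as follows. Suppose for contradiction that $c_1 \cdot [\omega] \leq 0$. Then $\int_M \varsigma\, d\mu \leq 0$. Using \eqref{mocha} and \eqref{java}, together with the Gauss--Bonnet and signature formulas expressed in terms of $s$, $\mathring r$, $W_\pm$, I would write $(2\chi + 3\tau)(M)$ as a curvature integral dominated by terms involving $s^2$ (equivalently $\varsigma^2$ after accounting for $|\nabla\omega|^2$) and $|W_+|^2$, and then combine this with Lemma \ref{key}'s pointwise bound $|W_+| \geq \varsigma/(2\sqrt 6)$ to force a contradiction with $2\chi + 3\tau \geq 0$ unless $\varsigma \equiv 0$ and the metric is Kähler with $s \equiv 0$ — but a scalar-flat Kähler metric on a rational or ruled surface would itself contradict $2\chi+3\tau \geq 0$ except in degenerate cases, and in any case $c_1 \cdot[\omega]=0$ is then excluded because such a manifold would be (a blow-up of) a ruled surface over a curve of genus $\geq 1$, which has $2\chi+3\tau < 0$.

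The cleanest route, and the one I would actually carry out, is to invoke the classification consequences already assembled in this section: by the Lalonde--McDuff/Liu theorem cited above, $(M,\omega)$ is symplectomorphic to a rational or ruled surface with its Kähler symplectic structure, and for such a manifold the sign of $c_1 \cdot [\omega]$ is a \emph{homological} invariant — it depends only on the cohomology classes $c_1$ and $[\omega]$, not on the metric. So it is enough to check $c_1 \cdot [\omega] > 0$ for \emph{one} symplectic form in the deformation class, namely a Kähler form; and for a rational or ruled surface with $c_1^2 = 2\chi + 3\tau \geq 0$ — that is, $\CP_1\times\CP_1$, $\CP_2$, or $\CP_2 \# k\overline{\CP}_2$ with $k \leq 1$ among the minimal-or-close cases, plus Hirzebruch surfaces — one has $c_1 = K^{-1}$ nef with $c_1^2 > 0$ (or $c_1^2=0$ with $c_1 \neq 0$ nef), so $c_1 \cdot [\omega] > 0$ for every Kähler class $[\omega]$ by the Nakai--Moishezon / Hodge index argument. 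Since by the remarks preceding Proposition \ref{one} any conformal class of symplectic type lies, after a diffeomorphism, in the component containing Kähler classes, and diffeomorphisms preserve both $c_1^2$ and the sign of $c_1\cdot[\omega]$, the conclusion follows for all symplectic conformal classes on $M$.

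The main obstacle is the bookkeeping around \emph{which} rational or ruled surfaces actually satisfy $2\chi + 3\tau \geq 0$ and verifying that for each of them the anti-canonical class pairs positively with every Kähler class — in particular handling the boundary case $c_1^2 = 0$ (the Hirzebruch surfaces $\mathbb{F}_n$ and $\CP_1$-bundles over elliptic curves fail this, so one must check genus-$0$ base forces $2\chi+3\tau > 0$ unless... ), ensuring the list is exactly $\{\CP_2, \CP_1\times\CP_1, \CP_2\#\overline{\CP}_2\}$ together with the knowledge that blowing up strictly decreases $c_1^2$. Everything else is a direct consequence of \eqref{blair}, Lemma \ref{key}, and the cited Lalonde--McDuff/Liu structure theorem.
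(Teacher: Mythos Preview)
Your curvature-integral attack (second paragraph) does not close: Lemma~\ref{key} bounds $|W_+|$ \emph{below} by $\varsigma/(2\sqrt 6)$, and inserting this into the Gauss--Bonnet expression for $2\chi+3\tau$ yields no contradiction with $\int_M\varsigma\,d\mu\leq 0$, since you have no control over $\mathring r$ and the inequalities run the wrong direction. Your ``cleanest route'' via Lalonde--McDuff/Liu is valid in outline --- the paper itself remarks that one could simply cite those results --- but your execution has real errors. The list of underlying $4$-manifolds with $2\chi+3\tau\geq 0$ is not $\{\CP_2,\,\CP_1\times\CP_1,\,\CP_2\#\overline{\CP}_2\}$; it is $\CP_2\# k\overline{\CP}_2$ for $0\leq k\leq 9$, $S^2\times S^2$, and the two oriented $S^2$-bundles over $T^2$. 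Hirzebruch surfaces have $c_1^2=8$, not $0$. And $-K$ is \emph{not} nef on many of the relevant complex structures (e.g.\ $\mathbb{F}_n$ for $n\geq 3$, or a blow-up of $\CP_2$ at nine suitably special points), so the Nakai--Moishezon step fails as stated. What actually works along these lines is a light-cone argument: since $c_1^2\geq 0$ and $[\omega]^2>0$ in the Lorentzian space $H^2(M,\RR)$, one has $c_1\cdot[\omega]\neq 0$ once $c_1\neq 0$, and the sign is determined by showing $c_1$ lies in the forward cone --- but that last step is precisely the content of the lemma, so an independent input is still required.

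The paper's proof supplies that input via Seiberg--Witten theory, and is quite different from anything you wrote down in detail. Since $b_+=1$ there are two chambers for the perturbed Seiberg--Witten equations; Taubes' theorem makes the invariant for the past chamber (relative to the time-orientation in which $[\omega]$ is future-pointing) equal to $\pm 1$. On the other hand, $M$ is rational or ruled and therefore carries a metric $\tilde g$ of positive scalar curvature, and the standard $C^0$ estimate then rules out solutions for $(\tilde g,\eta)$ with $\eta$ small, forcing $(\tilde g,0)$ into the \emph{future} chamber. Hence $[c_1]^+_{\tilde g}$ is future-pointing, so the non-spacelike class $c_1$ is nonzero and future-pointing, and $c_1\cdot[\omega]>0$ follows at once. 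This argument is uniform across all the manifolds in question and requires no case-by-case K\"ahler-cone bookkeeping.
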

\begin{proof} 
Fixing the metric $g$ and its  compatible symplectic form $\omega$ also specifies  a spin$^c$ structure on $M$ with  Chern class
$c_1= c_1(M,\omega )$. Now any compact almost-complex $4$-manifold
satisfies $c_1^2 = 2\chi + 3\tau$, so our hypotheses guarantee that $c_1^2 \geq 0$. 
Since our manifold has $b_+=1$, 
$H^2(M, \RR)$ is a Lorentzian 
inner-product space, and we now  time-orient this space so that 
$[\omega ]$ is a future-pointing time-like vector. 
Since $c_1^2\geq 0$, it follows that the real Chern class 
$c_1=c_1^\RR$ is a  time-like of null vector in $H^2(M, \RR)$.

Now, because $M$ has $b_+=1$, there are  really two different  Seiberg-Witten 
invariants attached to $M$, even after we have fixed   the  orientation and the  spin$^c$ structure
induced  by $\omega$; namely, the invariant depends on a choice
of one of two {\em chambers} \cite{liu,morgan}. The {\em past  chamber} consists of pairs $(\tilde{g},\eta )$, 
where $\tilde{g}$
is a metric,  $\eta$ is self-dual $2$-form with respect to $\tilde{g}$, and 
$2\pi [c_1]_{\tilde{g}}^+-[\eta_H]$ is a past-pointing  time-like vector;
here $\eta_H$ is the harmonic part of $\eta$ with respect to $\tilde{g}$.
Similarly, the  {\em future chamber} consists of 
pairs $(\tilde{g},\eta)$ for which $2\pi [c_1]_{\tilde{g}}^+-[\eta_H]$ is  future-pointing.
The  Seiberg-Witten 
invariant is essentially   a signed count of gauge-equivalence classes of solutions of the perturbed Seiberg-Witten equations 
\begin{eqnarray*}
\drc_{{\zap A}}\Phi &=&0\\
iF^+_{{\zap A}} +{\textstyle \frac{i}{2}} \Phi \odot \bar{\Phi} &=&  \eta 
\end{eqnarray*}
for any metric $\tilde{g}$ and a generic $\eta$ such that $(\tilde{g},\eta )$ belongs to the given chamber. 
Now Taubes \cite{taubes} has shown that, since $\omega$
is a symplectic form  compatible with a conformal rescaling of  $g$,  the Seiberg-Witten invariant is $\pm 1$ 
for the chamber containing $(g, t \omega )$, when $t\gg 0$; in other words,  the invariant is necessarily non-zero 
for the past  chamber. In particular, there {\em must} be a solution of $(\Phi, {\zap A})$ of the Seiberg-Witten 
equations for any $(\tilde{g}, \eta )$ belonging to the past chamber. On the other hand, 
the perturbed Seiberg-Witten equations imply the $C^0$ estimate
$$|\Phi |^2 \leq \max (2\sqrt{2} |\eta | -s , 0)$$
for any solution with respect to any metric. Since  $M$ admits  a metric $\tilde{g}$ of positive scalar curvature $s$,
it follows  that $(\tilde{g}, \eta )$ cannot belong to the past chamber for any self-dual $2$-form $\eta$ 
of uniformally  small norm. Hence $(\tilde{g}, 0)$ cannot even belong to the {\em closure} of the past chamber, and must therefore
belong to future chamber. Thus,  $[c_1]^+_{\tilde{g}}$ must be a future-pointing time-like vector, and  the non-space-like 
vector $c_1$ must therefore be non-zero and future-pointing. This implies that  $[c_1]^+_g$ is also future pointing, and hence  that 
$c_1\cdot [\omega ] > 0$. 
\end{proof}

This now immediately yields  the following result: 

\begin{thm} \label{window}
Let $M$ be the underlying $4$-manifold of a 
rational or ruled surface with $2\chi + 3 \tau \geq 0$.  
Then any conformal class $[g]$ of symplectic type on $M$ satisfies 
 $$\int_M |W_+|^2 d\mu \geq \frac{4\pi^2}{3} \frac{(c_1\cdot [\omega ])^2}{[\omega ]^2},$$
 with equality iff $[g]$ contains a K\"ahler metric $g$ of constant positive scalar curvature.
\end{thm}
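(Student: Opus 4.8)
The plan is to deduce Theorem~\ref{window} by combining Proposition~\ref{one} with Lemma~\ref{limpid}, with essentially no new analytic input. First I would invoke Lemma~\ref{limpid}: since $M$ underlies a rational or ruled surface with $2\chi+3\tau\geq 0$, every conformal class $[g]$ of symplectic type has its associated self-dual harmonic form $\omega$ satisfying $c_1\cdot[\omega]>0$, where $c_1=c_1(M,\omega)$. In particular the hypothesis $c_1\cdot[\omega]\geq 0$ of Proposition~\ref{one} is automatically fulfilled, so the inequality
$$\int_M |W_+|^2\, d\mu \;\geq\; \frac{4\pi^2}{3}\,\frac{(c_1\cdot[\omega])^2}{[\omega]^2}$$
holds for \emph{every} conformal class of symplectic type on $M$, with equality iff $[g]$ contains a K\"ahler metric of constant, non-negative scalar curvature.

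The only remaining point is to sharpen the equality clause from ``non-negative'' to ``positive''. Here I would argue that a constant-scalar-curvature K\"ahler metric $g$ on $M$ whose K\"ahler form $\omega$ satisfies $c_1\cdot[\omega]>0$ must in fact have $s>0$. Indeed, for a K\"ahler metric $\varsigma=s$, so \eqref{blair} gives $\int_M s\, d\mu = 4\pi\, c_1\cdot[\omega]>0$; since $s$ is constant and the volume is positive, this forces $s>0$. Thus the equality case in Proposition~\ref{one} — after Lemma~\ref{limpid} rules out $c_1\cdot[\omega]=0$ and hence, via the same identity, rules out $s\equiv 0$ — is exactly: $[g]$ contains a K\"ahler metric of constant positive scalar curvature. (One should also note that such a metric automatically realizes the claimed lower bound, since the chain of inequalities in the proof of Proposition~\ref{one} becomes a chain of equalities for a K\"ahler metric with $s$ a positive constant.)

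I do not anticipate a genuine obstacle here: the theorem is a packaging of two already-established results. The one place demanding a sentence of care is the equality discussion — making sure that the strict inequality $c_1\cdot[\omega]>0$ from Lemma~\ref{limpid} is genuinely incompatible with $s\equiv 0$ in the K\"ahler case, which is precisely what \eqref{blair} (specialized to K\"ahler metrics, where $\varsigma=s$) delivers. A secondary, purely expository point is to remind the reader that conformal invariance of $\int_M|W_+|^2\,d\mu$ together with the fact that Proposition~\ref{one}'s proof passes to an almost-K\"ahler conformal representative is what makes the statement independent of the chosen metric in $[g]$; but this is already subsumed in Proposition~\ref{one} and need not be re-derived.
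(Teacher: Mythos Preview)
Your proposal is correct and follows essentially the same approach as the paper: invoke Lemma~\ref{limpid} to obtain $c_1\cdot[\omega]>0$, apply Proposition~\ref{one} to get the inequality together with the equality characterization, and then upgrade ``non-negative'' to ``positive'' via the identity $\int_M s\,d\mu = 4\pi\,c_1\cdot[\omega]$ for K\"ahler metrics. The paper's proof is slightly terser in justifying the last step, but the logic is identical.
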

\begin{proof} By Lemma \ref{limpid},  any symplectic conformal class 
 on  $M$ satisfies  $c_1\cdot [\omega ] >0$.
  The 
hypotheses of Proposition \ref{one} are therefore fulfilled by any symplectic 
conformal class $[g]$ on $M$, so  \eqref{proper} holds for every such class. 
Moreover,  because $c_1\cdot [\omega ] > 0$ for any K\"ahler metric on $M$, 
any constant-scalar-curvature metric K\"ahler metric 
on $M$ must have {\em positive} scalar curvature.
Thus,  Proposition \ref{one} predicts that \eqref{proper} is saturated exactly
when the almost-K\"ahler representative $g$ of $[g]$ is 
a K\"ahler metric of constant positive scalar curvature.
\end{proof}

Up to diffeomorphism, the del Pezzo surfaces are exactly the rational or ruled surfaces
with $2\chi + 3\tau > 0$. Theorem \ref{cornerstone}  is therefore just a specialization of Theorem \ref{window}. It is perhaps  worth mentioning, however, 
 that Theorem \ref{window} also applies
to a few  manifolds which do not arise as 
del Pezzo surfaces --- namely,  to     $\CP_2\# 9 \overline{\CP}_2$
and the two  oriented $S^2$-bundles over $T^2$. 

\begin{thm}\label{pane}
Let $M$ be the underlying $4$-manifold of a 
rational or ruled surface with $2\chi + 3 \tau \geq 0$.  
Then any conformal class $[g]$ of symplectic type on $M$ satisfies 
\begin{equation}
\label{simple}
\int_M |W_+|^2 d\mu \geq \frac{4\pi^2}{3} (2\chi + 3\tau ) (M),
\end{equation}
 with equality iff  $[g]$ contains a K\"ahler-Einstein metric 
 $g$ with $\lambda > 0$. In particular, equality never occurs if 
 $(2\chi + 3\tau ) (M) = 0$. 
\end{thm}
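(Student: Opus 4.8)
The plan is to deduce Theorem \ref{pane} from Theorem \ref{window} together with the Cauchy--Schwarz inequality and the arithmetic--geometric relation between $c_1\cdot[\omega]$, $[\omega]^2$ and $c_1^2$. First I would observe that, since $b_+(M)=1$, the intersection form on $H^2(M,\RR)$ is Lorentzian, and both $[\omega]$ and (by Lemma \ref{limpid}) $c_1$ are future-pointing with $c_1\cdot[\omega]>0$ and $[\omega]^2>0$. The reverse Cauchy--Schwarz inequality for Lorentzian inner products then gives
$$
(c_1\cdot[\omega])^2 \geq c_1^2\, [\omega]^2,
$$
so that
$$
\frac{(c_1\cdot[\omega])^2}{[\omega]^2} \geq c_1^2 = (2\chi+3\tau)(M),
$$
the last equality being the standard identity $c_1^2=2\chi+3\tau$ valid on any compact almost-complex $4$-manifold. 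Combining this with Theorem \ref{window} immediately yields \eqref{simple}.

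Next I would analyze the equality case. Equality in \eqref{simple} forces equality in Theorem \ref{window}, hence $[g]$ contains a K\"ahler metric $g$ of constant positive scalar curvature with K\"ahler form $\omega$; it also forces equality in the Lorentzian Cauchy--Schwarz step, which means $c_1$ and $[\omega]$ are parallel in $H^2(M,\RR)$, i.e. $c_1 = \Lambda[\omega]$ for some real $\Lambda$, necessarily positive since $c_1\cdot[\omega]>0$ and $[\omega]^2>0$. For a constant-scalar-curvature K\"ahler surface, the Ricci form $\rho$ is harmonic and self-dual, hence a constant multiple of $\omega$ by the one-dimensionality of the self-dual harmonic forms; since $[\rho]=2\pi c_1=2\pi\Lambda[\omega]$, the harmonic representatives must agree, so $\rho = 2\pi\Lambda\,\omega$, which says precisely that $g$ is K\"ahler--Einstein with positive Einstein constant. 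The converse is immediate: a K\"ahler--Einstein metric with $\lambda>0$ has $c_1$ proportional to $[\omega]$, so both inequalities above become equalities and \eqref{simple} is saturated. Finally, if $(2\chi+3\tau)(M)=0$ then $c_1^2=0$, so $c_1$ is a nonzero null vector while $[\omega]$ is timelike; these can never be proportional, so equality cannot hold, and in particular $M$ admits no K\"ahler--Einstein metric of positive scalar curvature — consistent with $c_1^2=0$ ruling out $c_1>0$.

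I expect the only genuine subtlety to be the equality discussion: one must be careful that the reverse Cauchy--Schwarz inequality is being applied in the correct (both vectors causal, same time-orientation) regime, which is exactly what Lemma \ref{limpid} secures, and one must invoke the one-dimensionality of $\mathcal{H}^+$ to pin down the Ricci form once constant scalar curvature is known. The inequality \eqref{simple} itself is a formal consequence of results already proved. I would therefore present the argument as: (i) quote Theorem \ref{window}; (ii) apply Lorentzian Cauchy--Schwarz and $c_1^2=2\chi+3\tau$ to weaken the bound; (iii) trace through both equality conditions to identify the extremal metrics as K\"ahler--Einstein with $\lambda>0$; (iv) note that $c_1^2=0$ obstructs proportionality, giving the last sentence.
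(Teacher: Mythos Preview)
Your approach is essentially identical to the paper's: quote Theorem~\ref{window}, apply the reverse Cauchy--Schwarz inequality in the Lorentzian $H^2(M,\RR)$, and trace the equality conditions through to the K\"ahler--Einstein conclusion via harmonicity of the Ricci form. One slip: the Ricci form of a constant-scalar-curvature K\"ahler surface is harmonic but \emph{not} a priori self-dual, so your appeal to ``one-dimensionality of the self-dual harmonic forms'' is unjustified; however, the argument you give immediately afterward --- that $\rho$ and $2\pi\Lambda\,\omega$ are harmonic representatives of the same class, hence equal --- is correct and is exactly what the paper uses, so the redundant self-duality claim can simply be deleted.
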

\begin{proof} By 
Theorem \ref{window}, every symplectic conformal class $[g]$ on  $M$ 
satisfies 
$$ \int_M |W_+|^2 d\mu  \geq \frac{4\pi^2 }{3} \frac{ (c_1\cdot  [\omega])^2}{[\omega ]^2},$$
with equality only if the conformal class contains a K\"ahler metric $g$
with constant positive scalar curvature $s$. However, the intersection form 
on $H^2 (M, \RR)$ is of Lorentz type, and, for an appropriate time orientation, 
  $[\omega]$ is a  future-pointed  time-like  vector, while $c_1$ is   time-like
  or null future-pointing.  By the reverse Cauchy-Schwarz inequality 
for Minkowski space, we therefore have 
$$c_1\cdot [\omega ] \geq \sqrt{c_1^2} \sqrt{[\omega ]^2},$$
with equality iff $[\omega]$ is a multiple of $c_1$. Theorem \ref{one} therefore implies
that 
 $$\int_M |W_+|^2 d\mu  \geq \frac{4\pi^2 }{3}c_1^2(M),$$
 with equality iff $[\omega ]$ a multiple of $c_1$, and 
  $[g]$ is represented by a K\"ahler metric $g$ of constant positive   scalar curvature.
  In  particular, equality can only happen if $c_1^2 > 0$. 
However, equality would also force the Ricci-form to be harmonic, and belong
 to  the same deRham class as a multiple of the K\"ahler form. 
  Such a metric $g$ would
 necessarily be K\"ahler-Einstein, and, since 
 $c_1\cdot [\omega ] > 0$, this Einstein metric would necessarily have $\lambda > 0$. 
\end{proof}

Restricting to the case when $2\chi + 3\tau > 0$ now yields 
Theorem \ref{keystone}. For related results regarding manifolds with 
$W_+\equiv 0$, see \cite{inyoungthesis}. 

Finally, notice that equality in \eqref{simple} can   never occur   if
 $M=\CP_2\# \overline{\CP}_2$ or $\CP_2\# 2\overline{\CP}_2$, 
since neither of these manifolds admits a K\"ahler-Einstein metric. 
This raises the question of whether a better estimate might be 
possible in these cases. In the next section, we will show  that such an improvement
does at least  hold for  conformal classes on these manifolds which satisfy
an auxiliary symmetry condition.

\section{ Einstein Hermitian Metrics}
\label{hermitian}

As previously mentioned, every del Pezzo surface $(M^4, J)$ admits an Einstein metric $h$
which is Hermitian with respect to the complex structure $J$. In fact \cite{lebhem},
every such metric is conformal to a {\em K\"ahler} metric $g$. Moreover,  this K\"ahler metric 
is automatically extremal. Here  
a K\"ahler metric $g$, with compatible complex structure $J$ and K\"ahler form $\omega$, 
is said to be 
{\em extremal} if the gradient $\nabla s$ of its scalar curvature is a real part of a holomorphic 
vector field. This requirement can be re-expressed in many equivalent ways:
$$\bar{\partial}\nabla^{1,0}s =0 ~\Longleftrightarrow ~\mathscr{L}_{\nabla s}J=0~\Longleftrightarrow ~\mathscr{L}_{J\nabla s}g=0.$$
The last of these conditions  is the most relevant one for present purposes, because
it says that the symplectic vector field on $(M,\omega )$ with Hamiltonian $s$ is actually a Killing field for $g$. 

The concept of an extremal K\"ahler metric was introduced  by Calabi \cite{calabix},
who discovered that,   on a compact complex manifold, the extremality  
condition is equivalent to  the
 Euler-Lagrange equations for $\int s^2 d\mu$, considered  as a functional on all metrics
in a fixed K\"ahler class. 
Calabi's   trailblazing  discoveries  led him to 
 conjecture  \cite{calabix2}  that 
extremal K\"ahler metrics always  {\em minimize} this functional among metrics
in the fixed K\"ahler class, and this conjecture was eventually  proved by Xiuxiong Chen 
\cite{xxel}.   Calabi's  important theorems in the subject include the fact  \cite{calabix2} that, whenever 
$g$ is an extremal K\"ahler metric on a compact complex manifold $(M,J)$, 
the identity  component $\Iso_0(M,g)$ of the isometry group of $g$ is a {\em maximal compact}
connected 
subgroup of the complex automorphism group $\Aut (M,J)$, and so is completely determined, 
up to conjugacy, by the complex structure.

We have already mentioned the fact, first  discovered by Derdzi{\'n}ski \cite{derd}, that 
when a K\"ahler metric
on a $4$-manifold is conformally Einstein, it is also necessarily extremal. This occurs because
$4$-dimensional Einstein metrics are critical points of the Weyl functional, and the 
restriction of $\int |W_+|^2d\mu$ to the space of K\"ahler metrics is $1/24$ times the
Calabi functional  $\int s^2d\mu$. On a compact complex surface $(M,J)$, Derdzi{\'n}ski  also showed that,  if the conformal change
from the Einstein metric $h$ to the K\"ahler metric $g$ is  non-trivial, then the scalar curvature
$s$ of $g$ must be  positive everywhere on $M$, and that, up to a multiplicative  constant, one must have $h=s^{-2}g$. This, together with Calabi's above-mentioned result on isometry groups, allowed the present author to show \cite{lebhem} that this phenomenon
can only occur on  toric del Pezzo surfaces.

Recall that  a compact complex $m$-manifold is said to be {\em toric} if it is of K\"ahler type, 
has non-zero Euler characteristic, and is endowed with an effective  
action of of the $m$-torus $T^m = \RR^m/\ZZ^m$ by biholomorphisms \cite{lebtoric}. 
Any toric
manifold is a smooth projective algebraic variety, and the $T^m$ action always arises
from an algebraic  action of $(\CC^\times)^m$ which  acts freely on an open dense 
orbit, but also has a  finite, non-empty set of fixed points;
in other words, toric manifolds are  just the {\em non-singular  toric
varieties}, in the sense of \cite{fultor}. This point of view makes it particularly apparent
that such manifolds are always birationally equivalent to $\CP_m$. In particular, 
toric manifolds are always simply connected, and have Kodaira dimension $-\infty$.

For our purposes, however, the symplectic perspective will be more important.  
By averaging, any K\"ahler class on a toric  manifold contains K\"ahler metrics which are invariant under the torus action, allowing one to view them as compact symplectic $2m$-manifolds
endowed with Hamiltonian $T^m$-actions. Objects of the latter type are called
{\em Hamiltonian $T$-spaces}, and Guillemin \cite{guiltor} showed that, conversely,  
every  Hamiltonian $T$-space   arises from a toric manifold. 
This makes it seem very natural to put the study of toric K\"ahler metrics into
the context of toric almost-K\"ahler metrics \cite{donfield,dontor,lejmi}.

Let $(M,\omega)$ be a Hamiltonian $T$-space of real dimension $2m$, and let $g$ 
be any almost-K\"ahler metric adapted to $\omega$. Let $\xi_1, \ldots , \xi_m$
be the unit-period vector fields which generate the $T^m$-action, and let
$x^1, \ldots , x^m$ be Hamiltonians for these vector fields:
 $$dx^j = - \xi_j ~\lrcorner ~\omega~.$$
 The map $\vec{x}: M\to \RR^m$ is then called the {\em moment map} of the
 torus action, and its image $P=\vec{x}(M)$, called the {\em moment polytope},
 is always a Delzant polytope in $\RR^m$, meaning that 
 a neighborhood   in $P$ of any vertex 
can be 
transformed into a neighborhood of
   $\vec{0}\in [0,\infty )^m$ 
by an element of $\mathbf{SL}(m, \ZZ) \ltimes \RR^m$;
in particular, every face of $\partial P$  has a unique in-pointing  normal
$1$-form $\nu$ which is an  indivisible element  of the dual integer 
lattice $(\ZZ^m)^*$. 
 Our normalization of the $T^m$-action is  such that the usual $m$-dimensional Euclidean 
  volume measure $d{\zap a}$ on $P$ is exactly the push-forward of the Riemannian
  $2m$-dimensional volume form $d\mu$ on $M$. Even allowing for
  automorphisms of the torus $T^m$, the moment polytope is determined by
  $(M,\omega)$ and the group action up to translations and $SL(m, \ZZ)$ transformations. 
  Thus each face of the boundary $\partial P$ has a natural $(m-1)$-dimensional
  measure $d\lambda$ such that $|\nu \wedge d\lambda |= d{\zap a}$. Indeed,
  $\partial P$ is the image of an anti-canonical divisor in $M$, and $d\lambda$
  is exactly the push-forward of the $(2m-2)$-dimensional Riemannian volume  
  measure on this divisor. 
  
If the
 toric metric $g$  is K\"ahler, Donaldson \cite{dontor,lebtoric} discovered that its scalar curvature
$s$ satisfies the remarkable formula 
$$
\int_P fs~d{\zap a} = 4\pi \int_{\partial P} f ~d\lambda
$$
for any affine-linear function $f: \RR^m$. 
More recently, 
 Lejmi \cite{lejmi} discovered that this beautifully generalizes to become
 \begin{equation}
\label{lej}
\int_P f\varsigma~d{\zap a} = 4\pi \int_{\partial P} f ~d\lambda
\end{equation}
in the almost-K\"ahler case, where $\varsigma$ is the Hermitian scalar curvature, and 
$f$ is again affine linear. The key observation is  that   the matrix of functions 
$H^{jk}: P\to \RR$ defined by $H^{jk}= g(\xi^j, \xi^k)$ satisfies
$$\varsigma = - \sum_{jk} \frac{\partial^2 H^{jk}}{\partial x^j \partial x^k},$$
thereby generalizing  Abreu's celebrated formula for the scalar curvature \cite{abreu}
from the K\"ahler to the almost-K\"ahler setting. After carefully analyzing the boundary behavior
of the $H^{jk}$, 
one then obtains \eqref{lej} by integrating by parts twice, using the fact that 
$\partial^2 f/\partial x^j \partial x^k=0$.

Now let $(M^{2m},g,\omega)$  be a compact toric almost-K\"ahler manifold, and let 
$\mathfrak{t}$ be the Lie algebra of vector fields arising from the given
torus action. The linear space 
$$\mathfrak{K}= \{ f\in C^\infty (M)~|~ \exists \xi \in \mathfrak{t} ~s.t.~  
df = - \xi ~\lrcorner ~\omega\},$$
of  Hamiltonians for  Killing fields belonging to $\mathfrak{t}$
then has dimension $m+1$, and is exactly the pull-back of the affine-linear
functions on $\RR^m$ via the moment map $\vec{x}$. If $\thor : L^2 (M) \to \mathfrak{K}$
is the $L^2$-orthogonal projection, then, because $d{\zap a}$ is exactly the
push-forward of $d\mu$, \eqref{lej} asserts that $\thor (\varsigma)$ is completely
determined by the moment polytope $P$. In particular, the affine-linear function 
we obtain in this way is exactly the same as it would be in the special case in which 
our chosen metric is K\"ahler! 

In particular, this construction allows us to extend the usual definition 
of the {\em Futaki invariant} $\mathfrak{F}: \mathfrak{t}\to \RR$ to apply to any 
toric almost-K\"ahler manifold. (However, the usual Futaki invariant \cite{fuma,fuma0}
is conceived of as acting on arbitrary holomorphic vector fields; in the present context, 
it will  merely be defined on Killing fields.) Namely, one first identifies 
$\mathfrak{t}$ with the subspace $\mathfrak{K}_0\subset \mathfrak{K}$
of functions with integral $0$, and then taking the $L^2$ inner product of elements
of $\mathfrak{K}_0$ with $-\varsigma$ to obtain an element of $\mathfrak{t}^*$.
If  $\bar{\vec{x}}$ is the barycenter of $P$ with respect to the standard Euclidean
measure $d{\zap a}$, then $\mathfrak{t}=\mathfrak{k}_0$ can be identified
with the affine linear functions which vanish at $\bar{\vec{x}}$. If $\langle \vec{x}\rangle$
instead denotes the barycenter of $\partial P$ with respect to $d\lambda$,
then 
$$f\in \mathfrak{K}_0 \Longrightarrow \mathfrak{F}(f)
= -4\pi |\partial P|~f ( \langle \vec{x}\rangle ),$$
where 
$$
|\partial P| =\int_{\partial P} d\lambda ={\textstyle \frac{1}{(m-1)!}}~ c_1\cdot [\omega]^{m-1}
$$
is the $\lambda$ measure of $|\partial P|$. 
We may thus make the identification 
$$\mathfrak{F}= - 4\pi |\partial P| \vec{\mathfrak{D}}\in \mathfrak{K}^*$$
where 
$$
\vec{\mathfrak{D}}= \langle \vec{x}\rangle - \bar{\vec{x}}
$$
is the  displacement vector representing the separation between the 
barycenter of the interior and boundary of $P$.

\begin{center}
\mbox{
\beginpicture
\setplotarea x from 0 to 120, y from 0 to 130 
\arrow <2pt> [1,2] from 0 20  to 120 20
\arrow <2pt> [1,2] from 20 0 to 20 120 
\arrow <2pt> [1,2] from 61 61 to 65 65  
\hshade 20 50  100 50  20 100  100 20 100  /  
{\setlinear
\plot 20 50 20 100 /
\plot 50 20  100  20 / 
\plot 20 50  50 20 /
\plot 20 100 100 100 /
\plot 100 100 100 20 / 
}
\endpicture
}
\end{center}

The  {\em moment-of-inertia} matrix  matrix $\Pi$ defined by 
$$\Pi_{jk} = \int_P (x_j-\bar{x}_j)(x_k-\bar{x}_k)d{\zap a}$$
now represents the $L^2$ inner product on $\mathfrak{K}_0$, and 
$\Pi^{-1}\vec{\mathfrak{D}}$ represents $\thor (\varsigma) - \bar{\varsigma}$ in terms of the basis
$\{ x_j-\bar{x}_j\}$ for $\mathfrak{K}_0$; here
$$
\bar{\varsigma}=2\pi m  \frac{c_1\cdot [\omega]^{m-1}}{[\omega]^m}= 4\pi \frac{|\partial P|}{|P|}
$$
is the average value of the Hermitian scalar curvature $\varsigma$,  the last expression for which involves 
 the $m$-dimensional Euclidean volume
$$|P|  = \int_Pd{\zap a} = \int_M d\mu ={\textstyle \frac{1}{m!}}[\omega]^m$$
  of the moment polytope $P$. 
Thus
$$
\int_M[\thor (\varsigma ) -\bar{\varsigma}]^2d\mu = 16\pi^2|\partial P|^2 \vec{\mathfrak{D}}\cdot \Pi^{-1} \vec{\mathfrak{D}}$$
and
$$
\int_M [\thor (\varsigma )]^2 d\mu = 16\pi^2  |\partial P|^2\left( \frac{1}{|P|}+  \vec{\mathfrak{D}}\cdot \Pi^{-1} \vec{\mathfrak{D}}
\right) .
$$

\begin{defn}
Let $(M^{2m},\omega)$ be a Hamiltonian $T$-space. We then define the {\em virtual
action} associated with the given torus action to be 
\begin{equation}
\label{version1}
{\mathcal A}([\omega ]) = \frac{|\partial P|^2}{2}  \left( \frac{1}{|P|}+  \vec{\mathfrak{D}}\cdot \Pi^{-1} \vec{\mathfrak{D}}
\right) ,
\end{equation}
where $\vec{\mathfrak{D}}$ is the vector joining the barycenter of $P$ to the barycenter
of $\partial P$, $\Pi$ is the moment-of-inertia matrix of $P$,and where $|P|$ and $|\partial P|$ 
respectively denote the ${\zap a}$-measure of the moment polytope and $\lambda$-measure of  its boundary.\end{defn}

The above discussion then immediately gives us the following result:

\begin{prop} Let $(M,g,\omega)$ be any compact toric almost-K\"ahler manifold. Then
the Hermitian scalar curvature $\varsigma$ satisfies 
$$
\frac{1}{32\pi^2}\int_M \varsigma^2d\mu \geq  \mathcal{A} ([\omega ]),
$$
with equality iff $J\nabla \varsigma$ is a Killing field of $g$. 
\end{prop}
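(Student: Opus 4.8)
The plan is to read the Proposition directly off the $L^2$-orthogonal splitting $\varsigma = \thor(\varsigma) + \big(\varsigma - \thor(\varsigma)\big)$ introduced above. Since $\thor$ is the $L^2$-orthogonal projection onto $\mathfrak{K}$, the remainder $\varsigma - \thor(\varsigma)$ is $L^2$-orthogonal to $\thor(\varsigma)\in\mathfrak{K}$, so the Pythagorean theorem gives
$$
\int_M \varsigma^2\, d\mu \;=\; \int_M \big[\thor(\varsigma)\big]^2\, d\mu \;+\; \int_M \big[\varsigma - \thor(\varsigma)\big]^2\, d\mu \;\geq\; \int_M \big[\thor(\varsigma)\big]^2\, d\mu ,
$$
with equality if and only if $\varsigma = \thor(\varsigma)$, i.e.\ if and only if $\varsigma$ itself lies in $\mathfrak{K}$.

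The next step is merely to substitute the value of $\int_M [\thor(\varsigma)]^2\, d\mu$ already computed. By Lejmi's identity \eqref{lej}, the affine-linear function $\thor(\varsigma)$ is the one determined by the moment polytope $P$ exactly as in the K\"ahler case, and the moment-of-inertia bookkeeping carried out above gives
$$
\int_M \big[\thor(\varsigma)\big]^2\, d\mu \;=\; 16\pi^2\, |\partial P|^2\left( \frac{1}{|P|} + \vec{\mathfrak{D}}\cdot \Pi^{-1}\vec{\mathfrak{D}}\right) \;=\; 32\pi^2\, {\mathcal A}([\omega ]).
$$
Inserting this into the displayed inequality and dividing by $32\pi^2$ yields $\frac{1}{32\pi^2}\int_M \varsigma^2\, d\mu \geq {\mathcal A}([\omega])$, which is the asserted bound.

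It then remains to identify the equality case. By the above, equality holds precisely when $\varsigma\in\mathfrak{K}$, i.e.\ when there is a $\xi\in\mathfrak{t}$ with $d\varsigma = -\,\xi\lrcorner\omega$. Since $\omega(\cdot,\cdot) = g(J\cdot,\cdot)$ and $J^2 = -\mathrm{id}$, this equation is equivalent to $\xi = J\nabla\varsigma$, so equality forces $J\nabla\varsigma$ to be a Killing field (indeed a generator of the torus). For the converse, I would use that $\varsigma$, being a curvature invariant of the torus-invariant pair $(g,\omega)$, is $T^m$-invariant, so that $J\nabla\varsigma$ — which is always the Hamiltonian vector field of $\varsigma$ — is a $T^m$-invariant Hamiltonian vector field; if it is in addition Killing, then $T^m$ together with the flow of $J\nabla\varsigma$ generates a torus acting effectively and in a Hamiltonian fashion on the $2m$-manifold $M$, and the standard half-dimension bound for such actions forces $J\nabla\varsigma\in\mathfrak{t}$, hence $\varsigma\in\mathfrak{K}$.

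The estimate itself is essentially free once \eqref{lej} and the polytope computation are in hand, so the only genuinely substantive point is the converse half of the equality statement, where one must rule out the possibility that the Killing field $J\nabla\varsigma$ escapes the torus Lie algebra $\mathfrak{t}$; a reader content to interpret ``Killing field of $g$'' in the statement as ``element of $\mathfrak{t}$'' may skip this, since the equivalence with $\varsigma = \thor(\varsigma)$ is then immediate.
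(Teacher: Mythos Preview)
Your proof is correct and, for the inequality itself, identical to the paper's: the Pythagorean splitting with respect to $\thor$ followed by the substitution $\int_M[\thor(\varsigma)]^2\,d\mu = 32\pi^2\mathcal{A}([\omega])$ is exactly what is done there.

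For the converse half of the equality statement --- showing that if $J\nabla\varsigma$ is Killing then $\varsigma\in\mathfrak{K}$ --- you take a slightly different route. The paper argues directly on the moment polytope: since $\varsigma$ Poisson-commutes with $x_1,\ldots,x_m$, the Hamiltonians of the periodic generators of the torus $\overline{\{\exp(t\,J\nabla\varsigma)\}}$ descend to functions on $P$, and periodicity forces their Euclidean gradients to lie in the integer lattice at every point of $P$, hence to be constant, hence affine-linear. You instead package this into the half-dimension bound for effective Hamiltonian torus actions: since toric manifolds are simply connected, the enlarged isometric-symplectic torus generated by $T^m$ and the flow of $J\nabla\varsigma$ is automatically Hamiltonian, and the bound forces it to coincide with $T^m$. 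Both are valid; the paper's argument is self-contained at the polytope level, while yours is shorter but leans on a standard theorem whose proof is essentially the same integer-lattice reasoning.
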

\begin{proof}
Since $\thor$ is the  orthogonal projection $L^2(M)\to \mathfrak{K}$, the Pythagorean theorem
tells us that 
$$\int_M\varsigma^2 d\mu \geq \int_M [ \thor (\varsigma )]^2 d\mu~,$$
with equality if $\varsigma \in \mathfrak{K}$. Since we have just seen that 
$$
 \int_M [ \thor (\varsigma )]^2 d\mu = 32\pi^2 \mathcal{A}([\omega ])~,
$$
this establishes the desired inequality. 

It only remains to observe that if $\varsigma$ is the Hamiltonian of a Killing field $\xi$, then 
it must actually belong to $\mathfrak{K}$. This is true because $\varsigma$ Poisson commutes
with $x_1, \ldots , x_m$, so the 
 torus in $\Iso (M,g)$ given by $\overline{\{ \exp (t\xi)~|~ t\in \RR \}}$ would necessarily, by continuity,  consist of Hamiltonian symplectomorphisms whose 
Hamiltonians  would all Poisson commute with $x_1, \ldots , x_m$, too. The 
Hamiltonians $f_1, \ldots , f_k$ of the periodic generators of this torus  would therefore be functions of $(x_1, \ldots , x_m)$, and the periodicity of the corresponding vector fields
 would force the gradients of these functions on $\RR^m$  to necessarily belong   to the integer lattice at every point of $P$. The gradients $\nabla f_1, \ldots , \nabla f_k$ would therefore be constant, 
 and $f_1, \ldots , f_k$ would thus be  affine linear functions on 
$\RR^m$. It follows that $f_1, \ldots , f_k\in \mathfrak{K}$. But $\varsigma$ is, 
mod constants,  a linear combination of 
$f_1, \ldots , f_k$,   so it then follows  that $\varsigma$ must  belong to  $\mathfrak{K}$, too. 
\end{proof}

We now specialize to the case of real dimension $4$, where the virtual action
\begin{equation}
\label{version2}
\mathcal{A}([\omega ])= \frac{(c_1\cdot [\omega ])^2}{[\omega ]^2} + \frac{1}{32\pi^2} \|\mathfrak{F}([\omega ])\|^2
\end{equation}
has already been studied elsewhere \cite{lebuniq,lebtoric,lebhem10} for toric del Pezzo surfaces. 
Notice that this $m=2$ case  enjoys the special property that $\mathcal A$ becomes {\em scale invariant}; 
in particular, $\mathcal A$ is unchanged in this setting if we rescale moment polygon
by dilating the picture. This primarily reflects the fact that $L^2$ norms of 
curvature are scale invariant in dimension $4$; however, it also has the added benefit
that we would have obtained precisely the same formula virtual action if our conventions had assigned
period $2\pi$ (or any other fixed number) to each of the generating vector fields of
our torus action, instead of taking them to have period $1$. Note, however, that 
this scale invariance does not persist in higher dimensions, so that another choice
of conventions would have resulted in a formula for $\mathcal A$ which involved factors 
depending on the dimension.

Now let $M$ be a simply connected 
$4$-manifold which has been  equipped with some fixed $T^2$-action, and suppose that 
there is at least one symplectic structure $\omega_0$ which is $T^2$-invariant; in particular, 
this implies that the intersection form on $H^2(M,\RR)$ is of Lorentz type. Time-orient 
$H^2(M, \RR)$ so that $[\omega_0]$ is future-pointing, and let 
 ${\zap K}\subset H^2(M, \RR)$ denote the set of future-pointing of cohomology classes 
 on $M$ which are represented by $T^2$-invariant symplectic forms on $M$.
 We will call $\mathscr K$ the {\em symplectic cone} of our toric manifold;
 it is automatically open, and is invariant under the action of the positive 
 reals numbers $\RR^+$ by scalar multiplication. 
 We will call the quotient $\check{{\zap K}}$ the {\em reduced symplectic cone}
 of the toric manifold. Since the virtual action $\mathcal A$ is invariant under rescaling, 
 we will find it convenient to  view it as a function 
 $${\mathcal A}: \check{{\zap K}}\to \RR.$$

 As is shown by Guillemin \cite[Appendix 2]{guiltor}, these are just the the classes which arise from 
 K\"ahler metrics for some $T^2$-invariant complex structure on $M$. For
 toric del Pezzo surfaces, the complex structure with $c_1 > 0$ is actually {\em unique},
 so that  ${\zap K}$ can simply be identified with the {\em K\"ahler cone} of the corresponding
 del Pezzo surface. In particular, 
  ${\zap K}$ is then an open convex cone in $H^2(M, \RR)$, and the reduced cone 
  $\check{{\zap K}}$ can be viewed as  a convex open set in a Euclidean space
  of dimension $b_-(M)=b_2(M)-1$.

 \begin{prop} 
\label{behold}
Let $M$ be a toric del Pezzo surface, equipped with the associated 
$T^2$-action. Then the virtual action $$\mathcal{A}: \check{\zap K}\to \RR $$
has exactly one critical point.
This unique critical point occurs at the  absolute minimum of $\mathcal{A}$,
and is   the K\"ahler class of the unique conformally Einstein, 
K\"ahler metric on $M$.
\end{prop}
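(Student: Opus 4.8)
\emph{Strategy.} I would treat $\mathcal{A}$ purely as a function on the convex open set $\check{{\zap K}}$, exhibit the K\"ahler class of the conformally Einstein, K\"ahler metric on $M$ as a critical point, and then deduce from \emph{strict convexity} of $\mathcal{A}$ that this is its only critical point and its absolute minimum. The first ingredient is a clean lower bound: for any $\Omega\in\check{{\zap K}}$ and any $T^2$-invariant K\"ahler metric $g$ whose K\"ahler form represents $\Omega$, the standard identity $|W_+|^2=s^2/24$ for K\"ahler surfaces, together with the preceding Proposition (which, since $\varsigma=s$ for a K\"ahler metric, gives $\int_M s^2\,d\mu\geq 32\pi^2\mathcal{A}(\Omega)$, with equality iff $J\nabla s$ is a Killing field), yields
$$\int_M|W_+|^2\,d\mu\;=\;\tfrac{1}{24}\int_M s^2\,d\mu\;\geq\;\tfrac{4\pi^2}{3}\,\mathcal{A}(\Omega),$$
with equality precisely when $g$ is extremal.

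Next I would show that the K\"ahler class $\Omega_0$ of the conformally Einstein, K\"ahler metric is a critical point of $\mathcal{A}$. By hypothesis $M$ carries an Einstein metric $h$ which is Hermitian with respect to its toric complex structure; by Derdzi{\'n}ski's theorem the K\"ahler metric $g_0$ conformal to $h$ is extremal with $s_0>0$ everywhere and, up to a constant, $h=s_0^{-2}g_0$, so $[h]=[g_0]$, and, after rescaling $g_0$ if necessary, its K\"ahler form $\omega_0$ represents the point $\Omega_0\in\check{{\zap K}}$. Let $v\in T_{\Omega_0}\check{{\zap K}}$ be arbitrary; choosing a $T^2$-invariant closed real $(1,1)$-form $\eta$ representing $v$ and setting $\omega(t)=\omega_0+t\eta$, we obtain, for small $t$, a smooth path $g(t)$ of $T^2$-invariant K\"ahler metrics with $g(0)=g_0$ whose K\"ahler classes $\Omega(t)$ satisfy $\dot\Omega(0)=v$. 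The function
$$t\;\longmapsto\;\int_M|W_+|^2\,d\mu_{g(t)}\;-\;\tfrac{4\pi^2}{3}\,\mathcal{A}(\Omega(t))$$
is smooth, is non-negative by the lower bound above, and vanishes at $t=0$ because $g_0$ is extremal; hence its $t$-derivative vanishes at $0$. On the other hand $[g(0)]=[h]$ contains an Einstein metric, so it is a critical point of the Weyl functional and therefore of $[g]\mapsto\int_M|W_+|^2\,d\mu$; thus the first term has vanishing $t$-derivative at $0$. Consequently $\tfrac{d}{dt}\big|_{t=0}\mathcal{A}(\Omega(t))=0$, and since $v$ was arbitrary, $d\mathcal{A}|_{\Omega_0}=0$.

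The heart of the argument is then the claim that $\mathcal{A}\colon\check{{\zap K}}\to\RR$ is strictly convex, where $\check{{\zap K}}$ is realized as a convex open subset of the affine hyperplane $\{\Omega : c_1\cdot\Omega=1\}$. Write $\mathcal{A}=\mathcal{A}_1+\mathcal{A}_2$ with $\mathcal{A}_1(\Omega)=(c_1\cdot\Omega)^2/\Omega^2$ and $\mathcal{A}_2(\Omega)=\tfrac{1}{32\pi^2}\|\mathfrak{F}(\Omega)\|^2=\tfrac{1}{2}|\partial P_\Omega|^2\,\vec{\mathfrak{D}}_\Omega\cdot\Pi_\Omega^{-1}\vec{\mathfrak{D}}_\Omega\geq 0$, as in \eqref{version1} and \eqref{version2}. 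For a del Pezzo surface $c_1^2=2\chi+3\tau>0$ and $c_1$ is ample, so $c_1$ is timelike and $c_1^{\perp}$ is negative definite; on the slice one then has $\Omega^2=\tfrac{1}{c_1^2}(1-q)$ for a positive-definite quadratic form $q<1$ in the transverse coordinate, whence a direct computation gives $\operatorname{Hess}\mathcal{A}_1=c_1^2\bigl(2(1-q)^{-3}\,dq\otimes dq+(1-q)^{-2}\operatorname{Hess}q\bigr)\succ 0$, so $\mathcal{A}_1$ is strictly convex. It remains to show $\mathcal{A}_2$ is convex. Here one uses that, for the fixed toric variety $M$, the moment polytope $P_\Omega$ has a fixed normal fan and its support numbers depend affinely on $\Omega$, so that $|P_\Omega|$, $|\partial P_\Omega|$, the moment-of-inertia matrix $\Pi_\Omega$, and the displacement vector $\vec{\mathfrak{D}}_\Omega$ are all explicit piecewise-rational functions of $\Omega$; a careful analysis of these polygons --- of the Schur-complement type carried out in \cite{lebuniq,lebtoric,lebhem10} --- shows that $\mathcal{A}_2$ is a convex function of $\Omega$. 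I expect this convexity to be the main obstacle: the behaviour of $\Pi_\Omega$ and $\vec{\mathfrak{D}}_\Omega$ as $\Omega$ varies, in particular near the walls of $\check{{\zap K}}$ where a facet of $P_\Omega$ degenerates, requires delicate bookkeeping and is the one genuinely computational input.

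Finally, a strictly convex function on a convex open set that possesses a critical point $\Omega_0$ satisfies $\mathcal{A}(\Omega)>\mathcal{A}(\Omega_0)$ for every $\Omega\neq\Omega_0$; hence $\Omega_0$ is the unique critical point of $\mathcal{A}$ and is its absolute minimum. Since $\Omega_0$ was, by construction, the K\"ahler class of the conformally Einstein, K\"ahler metric on $M$ --- this, incidentally, also re-proves that such a metric is unique up to scale --- this is exactly the statement of Proposition \ref{behold}. $\square$
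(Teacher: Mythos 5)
There is a genuine gap, and it sits exactly where you yourself flag ``the main obstacle'': the convexity of the Futaki term $\mathcal{A}_2(\Omega)=\frac{1}{2}|\partial P|^2\,\vec{\mathfrak{D}}\cdot \Pi^{-1}\vec{\mathfrak{D}}$ as a function on the slice $\{c_1\cdot\Omega=1\}$. You assert this convexity and attribute it to \cite{lebuniq,lebtoric,lebhem10}, but those references do not prove it; they establish uniqueness of the critical point of $\mathcal{A}$ by different, surface-by-surface arguments, and no general convexity statement for $\mathcal{A}$ (or for $\mathcal{A}_2$) on the reduced K\"ahler cone is available. Since $\Pi^{-1}$, $\vec{\mathfrak{D}}$ and $|\partial P|$ are complicated rational functions of the support numbers of $P_\Omega$, there is no reason to expect $\mathcal{A}_2$ to be convex without an explicit verification, and your proof supplies none. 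Everything downstream of this claim --- the deduction that the critical point is unique and is the absolute minimum --- therefore has no support. (Note also that even in the one case where the paper does exhibit a positive second derivative, namely the one-parameter family on $\CP_2\#\overline{\CP}_2$, the parameter $\alpha$ is a polytope normalization rather than an affine coordinate on your slice, so that computation does not establish convexity in your sense; it only establishes uniqueness of the critical point, which is all that is actually needed.) A secondary inaccuracy: uniqueness of the minimizing \emph{class} does not by itself ``re-prove that such a metric is unique up to scale''; one still needs uniqueness of extremal K\"ahler metrics within a fixed class (Chen--Tian \cite{xxgang2}) or a symmetry argument, as the paper points out.

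For comparison, the paper's proof is a case analysis over the five toric del Pezzo surfaces. For $\CP_2$ and $\CP_1\times\CP_1$ the Futaki invariant vanishes by semisimplicity of the automorphism group, so $\mathcal{A}=\mathcal{A}_1$ and the reverse Cauchy--Schwarz inequality in the Lorentzian lattice identifies $c_1$ as the unique critical point. For $\CP_2\# 3\overline{\CP}_2$ the minimum is again at $c_1$, but the absence of other critical points is the delicate content of \cite[Prop.~4]{lebuniq}; for $\CP_2\# 2\overline{\CP}_2$ both uniqueness and globality of the minimum are \cite[Prop.~3]{lebuniq}; and for $\CP_2\#\overline{\CP}_2$ the paper computes $\mathcal{A}$ explicitly as a rational function of one variable and checks $d^2\mathcal{A}/d\alpha^2>0$. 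Your argument that the conformally Einstein K\"ahler class is a \emph{critical point} of $\mathcal{A}$ --- comparing the nonnegative difference $\int_M|W_+|^2\,d\mu_{g(t)}-\frac{4\pi^2}{3}\mathcal{A}(\Omega(t))$ with the Weyl-criticality of the Einstein conformal class --- is correct and is a genuinely different, attractive route to that particular step; likewise your computation that $\mathcal{A}_1$ is strictly convex on the slice is sound. But criticality plus convexity of $\mathcal{A}_1$ alone does not yield uniqueness, so you would either need to prove the convexity of $\mathcal{A}_2$ case by case (at which point you have essentially reconstructed the paper's computations) or fall back on the cited uniqueness propositions directly.
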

\begin{proof}
There are exactly five toric del Pezzo surfaces. 

Two of these, $\CP_2$ and 
$\CP_1\times \CP_1$, have semi-simple automorphism groups, and hence
vanishing Futaki invariant (because the Futaki invariant must be invariant under
the adjoint action). In these cases, we therefore just have 
${\mathcal A}([\omega ])  = (c_1\cdot [\omega)^2/[\omega ]^2$, and the unique
critical point therefore occurs at the absolute minimum $c_1$. This is of course the K\"ahler class of
the obvious K\"ahler-Einstein metric on either of these two manifolds. 

The three-point blow-up $\CP_2\# 3\overline{\CP}_2$ of the projective plane
also admits a K\"ahler-Einstein metric,  first discovered by Siu \cite{s},
and the uniqueness of this metric (up to automorphisms and rescalings)
follows from the Bando-Mabuchi theorem \cite{bama}. Because 
${\mathcal A}([\omega ])  \geq (c_1\cdot [\omega ])^2/[\omega ]^2$, with equality when $[\omega ] = c_1$, 
the reverse Cauchy-Schwarz inequality for Minkowski space predicts that
 the unique minimum of ${\mathcal A}$ must occur
at the K\"ahler class $c_1$ of this K\"ahler-Einstein metric. However, the fact that there
are no other critical points of $\mathcal A$ is a more delicate issue, and was proved in 
\cite[Proposition 4]{lebuniq}.

The existence of an Einstein,  conformally K\"ahler metric on the 
two-point blow-up $\CP_2\# 2\overline{\CP}_2$ of the projective plane was first
proved in \cite{chenlebweb}; see  \cite{lebhem10} for a somewhat different proof. 
It was them proved in  \cite[Proposition 3]{lebuniq} that 
 the corresponding
critical point of $\mathcal A$ is unique, and is in fact a global minimum. 
  By \cite{lebhem}, this reduces the uniqueness
 problem for the Einstein metric to the uniqueness (modulo automorphisms)
 of extremal Ka\"ahler metrics in a fixed K\"ahler class, which had previously been proved 
 in \cite{xxgang2}.

The relevant Einstein metric on the one-point blow-up
$\CP_2\# \overline{\CP}_2$ of the projective plane was constructed explicitly 
by Page \cite{page}, although it was Derdzi{\'n}ski \cite{derd} who later discovered
that it is in fact conformally K\"ahler. Uniqueness of this metric has usually
been proved via symmetry arguments \cite{lebhem}, but here we will need to 
reprove this by examining the virtual action $\mathcal A$. 
 By rescaling, we can 
arrange that the moment polygon takes the form
\begin{center}
\mbox{
\beginpicture
\setplotarea x from 0 to 180, y from 0 to 110 
\arrow <2pt> [1,2] from 0 20  to 200 20
\arrow <2pt> [1,2] from 20 0 to 20 100 
\put {$x_1$} [B1] at 207 17  
\put {$x_2$} [B1] at  17 105  
\put {$1$} [B1] at 10 40  
\put {$\alpha$} [B1] at 50 75 
\put {$\alpha+1$} [B1] at 65 8  
\hshade 20 20  130 70  20 80    /  
{\setlinear
\plot 20 20 20 70 /
\plot 20 20  130 20 / 
\plot 20 70 80 70 /
\plot 80 70 130 20 / 
}
\endpicture
}
\end{center}
and the virtual action is then given \cite{lebtoric} by 
$$
{\mathcal A}([\omega ]) =  \frac{12\alpha^3+42\alpha^2+48\alpha+
9}{6\alpha^2+6\alpha + 1}~.
$$
This must have a minimal value for $\alpha \in (0,\infty )$ because it behaves like the decreasing function $9-6\alpha$ when $\alpha$ is small, and like the increasing function $2\alpha$
when $\alpha$ is large. Moreover, this critical point is unique, because 
$$\frac{d^2{\mathcal A}}{d\alpha^2}  = 48\cdot \frac{24\alpha^3+18\alpha^2+6\alpha +1}{(6\alpha^2+6\alpha+1)^3}$$
is positive when $\alpha > 0$. 
We may thus   once again appeal to 
\cite{xxgang2} to obtain a heavy-handed proof of the 
uniqueness (up to rescalings and automorphisms) of the Einstein, Hermitian metric
on this space ---  although this uniqueness statement can actually be proved in more elementary ways \cite{lebhem,lebuniq}.
\end{proof}

To prove our next main result, we will need the following fact:

\begin{lem}\label{positive}
Let $M$ be a toric del Pezzo surface, equipped with the associated $T^2$-action. 
Let $g$ be any almost-K\"ahler metric on $M$ which is $T^2$-invariant. Then 
$\thor (\varsigma ) \geq 0$  at every point of $M$.
\end{lem}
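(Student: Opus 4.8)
The plan is to reduce the statement to a finite, completely explicit check on moment polygons. By Lejmi's formula \eqref{lej}, the function $\thor(\varsigma)$ depends only on the moment polygon $P$ of $(M,\omega)$: it is the pull-back, via the moment map $\vec{x}$, of the unique affine-linear function $L$ on $\RR^2$ satisfying $\int_P fL\,d{\zap a}=4\pi\int_{\partial P}f\,d\lambda$ for every affine-linear $f$. In particular $L$ is exactly the function one would obtain from any $T^2$-invariant K\"ahler metric in the class $[\omega]$ --- and, as noted in the discussion above, it is given by the explicit expression $L(\vec{x})=\bar{\varsigma}+(\vec{x}-\bar{\vec{x}})\cdot\Pi^{-1}\vec{\mathfrak{D}}$ in terms of the average Hermitian scalar curvature $\bar{\varsigma}$, the barycenters $\bar{\vec{x}}$ of $P$ and $\langle\vec{x}\rangle$ of $\partial P$, and the moment-of-inertia matrix $\Pi$. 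Because $\thor(\varsigma)$ is the pull-back of an affine-linear function, $\min_M\thor(\varsigma)=\min_P L$ is attained at a vertex of $P$; it therefore suffices to prove that $L(v)\geq 0$ at every vertex $v$ of every moment polygon arising from a toric del Pezzo surface.

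Next I would invoke the classification: there are exactly five toric del Pezzo surfaces. For $\CP_2$ and $\CP_1\times\CP_1$ the automorphism group is semisimple, so the Futaki invariant vanishes, $\vec{\mathfrak{D}}=0$, and hence $L\equiv\bar{\varsigma}>0$; there is nothing to prove. For each of the three remaining surfaces --- the one-, two-, and three-point blow-ups of $\CP_2$ --- I would write out the moment polygon explicitly, as in the trapezoid pictured above for $\CP_2\#\overline{\CP}_2$, the free parameters being (up to overall rescaling, under which the inequality $L\geq 0$ is invariant) the symplectic areas of the exceptional curves: one parameter for $\CP_2\#\overline{\CP}_2$, two for $\CP_2\#2\overline{\CP}_2$, and three for $\CP_2\#3\overline{\CP}_2$. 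From the polygon data $(|P|,|\partial P|,\bar{\vec{x}},\langle\vec{x}\rangle,\Pi)$ one computes $L$ as an explicit rational function of these parameters and then verifies $L(v)\geq 0$ at each vertex. For $\CP_2\#\overline{\CP}_2$ this is an elementary one-variable estimate, since the reflection symmetry of the trapezoid forces $L$ to depend on only one of the two moment coordinates, leaving just two vertex values to bound; and it is precisely here that the hypothesis $c_1>0$ enters, because the analogous computation on a non-del-Pezzo rational surface (for instance a Hirzebruch surface $\mathbb{F}_n$ with $n\geq 2$) produces an $L$ that changes sign. For the two- and three-point cases I would use the residual reflection symmetries of the polygons --- a $\ZZ_2$ for the two-point blow-up, and the dihedral symmetry of the regular-hexagon model, only partially broken by a general K\"ahler class, for the three-point blow-up --- to reduce the number of independent vertex inequalities, and then check those that remain.

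The main obstacle is $\CP_2\#3\overline{\CP}_2$, where one must confirm $L(v)\geq 0$ at a vertex uniformly over the entire three-dimensional reduced K\"ahler cone $\check{\zap K}$. I expect the cleanest route to mimic the boundary analysis used in the proof of Proposition \ref{behold}: either show directly that the minimal vertex value of $L$, viewed as a function on $\check{\zap K}$, stays bounded away from negative values in the interior and remains $\geq 0$ along each wall of $\check{\zap K}$ (where the polygon degenerates by collapsing an exceptional curve or an edge-curve), or --- equivalently, and perhaps more transparently --- identify $L$ with the scalar curvature of the toric extremal K\"ahler metric in the given class (which exists in every K\"ahler class on a toric del Pezzo surface) and appeal to the fact that extremal K\"ahler metrics on del Pezzo surfaces have non-negative scalar curvature. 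In either formulation the hard content is the same explicit estimate with the five moment polygons, and once $L\geq 0$ has been established at every vertex of each of them, the lemma follows.
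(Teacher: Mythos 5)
Your overall strategy --- observe that $\thor(\varsigma)$ is the pull-back of the explicit affine-linear function $4\pi\bigl(\tfrac{|\partial P|}{|P|}+(\vec{x}-\bar{\vec{x}})\cdot\Pi^{-1}\vec{\mathfrak{D}}\bigr)$, so that its minimum is attained at a vertex of $P$, and then verify non-negativity at the vertices --- is exactly the right reduction, and it is the one the paper uses. But your proposal has a genuine gap precisely where you admit the difficulty lies: the case of $\CP_2\#3\overline{\CP}_2$ over its full three-parameter reduced K\"ahler cone is never actually carried out. You offer two fallback strategies, and the second one is circular: for a toric surface, the scalar curvature of the extremal K\"ahler metric in a class $[\omega]$ \emph{is} $\thor(\varsigma)$ (extremality means $s\in\mathfrak{K}$, and $\thor(\varsigma)$ depends only on the polygon), so ``extremal K\"ahler metrics on toric del Pezzo surfaces have non-negative scalar curvature'' is not an available fact you can appeal to --- it is a restatement of the lemma you are trying to prove. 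The paper closes this gap by citing \cite[Lemma B.2]{lebhem10}, where the vertex computation for the hexagon is actually performed uniformly over the K\"ahler cone; without that reference or the explicit estimate, your argument is a plan rather than a proof.

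There is also a point where the paper's route is genuinely slicker than yours and worth absorbing: rather than treating the five surfaces as five separate computations (with ad hoc symmetry reductions for the pentagon and trapezoid), the paper observes that the explicit formula for $\thor(\varsigma)$ is continuous under degeneration of the polygon obtained by letting an edge shrink to length zero, and that every other toric del Pezzo surface is a blow-down of $\CP_2\#3\overline{\CP}_2$ --- so its moment polygon is a limit of hexagons. The strict positivity on the hexagons therefore yields $\thor(\varsigma)\geq 0$ on all the other polygons at once, with no further case analysis. (This is also why the lemma is stated with $\geq 0$ rather than $>0$.) So the single estimate you must actually do is the hexagon one; if you either reproduce the vertex computation of \cite[Lemma B.2]{lebhem10} or cite it, and then add the degeneration argument, your proof is complete.
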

\begin{proof} As we have seen, $\thor (\sigma )$ only depends on the moment polygon,
and indeed is explicitly given by the affine-linear function 
 $$\thor (\varsigma ) =  4\pi \left( \frac{|\partial P|}{|P|}+(\vec{x}-\bar{\vec{x}})\cdot \Pi^{-1}\vec{\mathfrak{D}}\right)$$
on the moment polygon $P$. 
 For the three-point blow-up  $\CP_2\# 3\overline{\CP}_2$ of the complex
projective plane,   \cite[Lemma B.2]{lebhem10} proves that $\thor (\varsigma )> 0$ on $P$ 
by showing that $\thor (\varsigma )$ is positive at every vertex of $P$. 
(While the statement of that Lemma  merely  emphasizes consequences for 
the behavior of putative extremal  K\"ahler metrics, the proof precisely show that 
$\thor (\varsigma ) > 0$.)
On the other hand, the above  explicit formula for $\thor( \varsigma )$ is clearly continuous under
degeneration of the polygon by letting  a side shrink  to length  zero. Since every other 
toric del Pezzo surface is a blow-down of $\CP_2\# 3\overline{\CP}_2$, it therefore follows
that $\thor (\varsigma ) \geq 0$ everywhere. 
\end{proof}

\begin{rmk} It can in fact be shown that $\thor (\varsigma ) > 0$ everywhere on 
any toric del Pezzo surface; cf.  \cite[Lemma A.2]{lebhem10}. However,  this more
delicate fact will not be needed for our present purposes. 
\end{rmk}

We are now in position to prove the following:

\begin{thm}\label{two}
Let $M$ be the underlying $4$-manifold of a  toric del Pezzo surface, equipped with 
the associated  $T^2$ action. Then any toric 
conformal class  $[g]$  of symplectic type on $M$ satisfies
$$ \int_M |W_+|^2 d\mu  \geq \frac{4\pi^2 }{3} {\mathcal A}([\omega ])$$
where the virtual action ${\mathcal  A}$ is given by either \eqref{version1} or \eqref{version2}.
Moreover, equality holds iff $[g]$ contains an extremal K\"ahler metric $g$. 
\end{thm}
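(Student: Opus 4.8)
The plan is to promote the pointwise bound of Lemma~\ref{key} to an $L^2$ bound by pairing it with the non-negative weight $\thor(\varsigma)$ furnished by Lemma~\ref{positive}, and then to recognize the resulting quantity $\int_M[\thor(\varsigma)]^2\,d\mu$ as $32\pi^2\mathcal A([\omega])$ via the moment-polytope computation carried out above. First I would pass to a convenient representative. Since $\int_M|W_+|^2d\mu$ is conformally invariant in dimension $4$, and since a toric conformal class of symplectic type contains a $T^2$-invariant almost-K\"ahler metric $g$ with K\"ahler form $\omega$ normalized so that $|\omega|_g\equiv\sqrt 2$ (average a $T^2$-invariant metric in the class and rescale; the self-dual harmonic form is automatically $T^2$-invariant because $T^2$ is connected and $b_+=1$), I may assume $g$ is such a metric. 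For this $g$ the Hermitian scalar curvature $\varsigma$ and its $L^2$-projection $\thor(\varsigma)\in\mathfrak K$ are defined, and \eqref{blair} together with Lemma~\ref{limpid} gives $\int_M\thor(\varsigma)\,d\mu=\int_M\varsigma\,d\mu=4\pi c_1\cdot[\omega]>0$, so in particular $\thor(\varsigma)\not\equiv 0$.

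Next comes the core estimate. Because $\thor(\varsigma)\ge 0$ everywhere by Lemma~\ref{positive}, I may multiply the pointwise inequality $|W_+|\ge\varsigma/2\sqrt 6$ of Lemma~\ref{key} by $\thor(\varsigma)$ and integrate to get
$$\int_M|W_+|\,\thor(\varsigma)\,d\mu\ \ge\ \frac{1}{2\sqrt 6}\int_M\varsigma\,\thor(\varsigma)\,d\mu\ =\ \frac{1}{2\sqrt 6}\int_M[\thor(\varsigma)]^2\,d\mu,$$
the last equality because $\thor$ is a self-adjoint projection. Cauchy--Schwarz bounds the left-hand side above by $\big(\int_M|W_+|^2d\mu\big)^{1/2}\big(\int_M[\thor(\varsigma)]^2d\mu\big)^{1/2}$; cancelling the nonzero factor $\big(\int_M[\thor(\varsigma)]^2d\mu\big)^{1/2}$ and squaring then yields
$$\int_M|W_+|^2\,d\mu\ \ge\ \frac{1}{24}\int_M[\thor(\varsigma)]^2\,d\mu\ =\ \frac{4\pi^2}{3}\,\mathcal A([\omega]),$$
where the last equality is the polytope identity $\int_M[\thor(\varsigma)]^2d\mu=32\pi^2\mathcal A([\omega])$ established above; the equivalence of \eqref{version1} and \eqref{version2} gives both forms of $\mathcal A$.

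The delicate point, which I expect to be the main obstacle, is the equality discussion. Equality forces both steps above to be sharp. Sharpness in Cauchy--Schwarz gives $|W_+|=c\,\thor(\varsigma)$ for a constant $c\ge 0$, and feeding this back into the chain forces $c=1/2\sqrt 6$; sharpness of the first step gives $\big(|W_+|-\varsigma/2\sqrt 6\big)\thor(\varsigma)\equiv 0$, so on the open set $U=\{\thor(\varsigma)>0\}$ one has $|W_+|=\varsigma/2\sqrt 6$, whence $\nabla\omega\equiv 0$ on $U$ by the proof of Lemma~\ref{key}. Here is where I would exploit that $\thor(\varsigma)$ is affine-linear on the moment polytope: being non-negative and not identically zero, its zero locus is the $\vec x$-preimage of a face of $P$, hence has measure zero in $M$, so $U$ is dense; since $\nabla\omega$ is continuous, $\nabla\omega\equiv 0$ on all of $M$ and $g$ is K\"ahler with K\"ahler form $\omega$. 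Then $\varsigma=s$, and on $U$ the identities $|W_+|=\varsigma/2\sqrt 6$ and $|W_+|=\thor(\varsigma)/2\sqrt 6$ give $\varsigma=\thor(\varsigma)$ there, hence everywhere by continuity; thus $\varsigma\in\mathfrak K$, and by the Proposition establishing $\tfrac{1}{32\pi^2}\int_M\varsigma^2d\mu\ge\mathcal A([\omega])$ this means $J\nabla\varsigma=J\nabla s$ is a Killing field, i.e.\ $g$ is extremal K\"ahler. Conversely, if $[g]$ contains an extremal K\"ahler metric, then after moving $[g]$ by an automorphism (which changes neither the Weyl energy nor $\mathcal A([\omega])$) this metric may be taken $T^2$-invariant by Calabi's theorem on the isometry group, so $s=\varsigma\in\mathfrak K$ and $\int_M|W_+|^2d\mu=\tfrac{1}{24}\int_M s^2\,d\mu=\tfrac{1}{24}\int_M[\thor(\varsigma)]^2d\mu=\tfrac{4\pi^2}{3}\mathcal A([\omega])$, so equality holds. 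It is precisely the measure-zero structure of $M\setminus U$ that allows the argument to run using only Lemma~\ref{positive}, without invoking the strict positivity recorded in the Remark.
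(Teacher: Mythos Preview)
Your argument is correct and follows the same route as the paper: pass to the $T^2$-invariant almost-K\"ahler representative, multiply the pointwise bound of Lemma~\ref{key} by the nonnegative weight $F=\thor(\varsigma)$ from Lemma~\ref{positive}, integrate using $\int F\varsigma=\int F^2$, and finish with Cauchy--Schwarz to obtain $\int|W_+|^2\ge\frac{1}{24}\int F^2=\frac{4\pi^2}{3}\mathcal A([\omega])$. Your equality discussion is more explicit than the paper's; the only superfluous step is the appeal to Calabi in the converse, since any K\"ahler metric in a $T^2$-invariant conformal class of symplectic type is automatically the unique almost-K\"ahler representative and hence already $T^2$-invariant, so the Proposition immediately gives $s=\varsigma\in\mathfrak K$.
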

\begin{proof}
Let $[g]$ be a  symplectic  conformal class which is invariant under the fixed $T^2$-action, 
with  harmonic $2$-form $\omega\neq 0$. The conformal invariance of harmonic $2$-forms then implies that $\omega$ is also $T^2$-invariant. Let $g\in [g]$ be the unique metric in the conformal class
such that $|\omega |\equiv \sqrt{2}$. It then follows that $g$ is a  $T^2$-invariant
almost-K\"ahler metric compatible with the symplectic form $\omega$. 
Since $\int |W_+|^2d\mu$ is conformally invariant, we will once again prove
the desired statement about the conformal class $[g]$ by evaluating the integral with respect to $g$. 

Let us   now set
 $$F = \thor (\varsigma ),$$
 where 
$$\varsigma = \frac{s+s^*}{2}$$ is  again the Hermitian scalar curvature
of $(M,g,\omega)$,  and where 
 $$\thor : C^\infty (M) \to \mathfrak{K}$$ 
is  again the $L^2$ projection to the 
the subspace 
$$\mathfrak{K}= \{ f\in C^\infty (M)~|~ \exists \xi \in \mathfrak{t} 
~s.t.~  df = - \xi ~\lrcorner ~\omega\}$$
of Hamiltonians for  $T^2$-invariant Killing fields.
Notice  that Lemma \ref{positive} then tells us that $F\geq 0$
everywhere on $M$. Moreover, since 
$$
\bar{F}= \bar{\varsigma} = 4\pi \frac{|\partial P|}{|P|}> 0,
$$
and since $F$ is the pull-back of an affine-linear function on $P$, we in fact have $F> 0$ on an open dense set of $M$.

 Now  Lemma \ref{key} tells us that  
 $$2\sqrt{6} |W_+| \geq \varsigma$$
 at every point of $M$, with equality exactly at those points where $\nabla \omega =0$ and $s \geq 0$. Multiplying by $F\geq 0$, we therefore have 
 $$ 2\sqrt{6} ~F |W_+| \geq F \varsigma ~,$$
and, since $F> 0$ on a dense set,   equality occurs 
everywhere  iff $g$ is K\"ahler, with $s \geq 0$. 
 Integrating over $M$, we therefore have
 \begin{eqnarray*}
2\sqrt{6} \int_M F |W_+| d\mu &\geq& \int_M F \varsigma d\mu\\
 &=& \int_M  \thor (\varsigma ) \varsigma  ~d\mu\\
  &=& \int_M \left[\thor (\varsigma  )\right]^2 d\mu\\
 &=& \int_M F^2 ~d\mu,
\end{eqnarray*}
 since $\thor$ is an orthogonal projection with respect to the $L^2$ inner product; moreover, equality holds throughout 
  iff $g$ is an {\em extremal} K\"ahler metric. On the other hand, the Cauchy-Schwarz
  inequality tells us that 
$$\left(\int_M F^2 d\mu\right)^{1/2}\left(\int_M |W_+|^2 d\mu\right)^{1/2}\geq \int_M F |W_+| d\mu ,$$ 
so that   
$$2\sqrt{6}\left(\int_M F^2 d\mu\right)^{1/2}\left(\int_M |W_+|^2 d\mu\right)^{1/2}\geq \int_M F^2 d\mu~.$$ 
It follows that 
$$\int_M |W_+|^2 d\mu \geq\left( \frac{1}{2\sqrt{6}}\right)^2\int_M F^2 d\mu =\frac{4\pi^2}{3} \mathcal{A}([\omega ]) ,$$ 
with equality   iff $g$ is an extremal K\"ahler metric.
\end{proof}

In particular,  Theorem \ref{two} tells us that 
any toric symplectic class on a del Pezzo surface satisfies 
$$
\int_M |W_+|^2 d\mu \geq \frac{4\pi^2}{3} \min_{[\omega ]\in {\zap K}} \mathcal{A}([\omega ]),
$$
with equality iff $[g]$ contains an extremal K\"ahler metric $g$ 
which minimizes $\mathcal{A}$. But   Proposition \ref{behold} 
asserts that any such minimizing $g$ is conformally related to an Einstein metric,
that any Einstein Hermitian metric arises in this way, and that,
up to automorphisms and rescalings, exactly one such Einstein 
metric exists on each toric del Pezzo surface. This proves Theorem \ref{capstone}.

\section{Einstein Metrics and the Period Map}
\label{period} 

  Given a smooth Riemannian metric $g$ on a compact oriented $4$-manifold $M$ with 
  $b_+=1$, the space of self-dual $2$-forms gives us a time-like $1$-dimension subspace
  $\RR[ \omega ]$ in $H^2(M, \RR)$. The map 
  $g\longmapsto \RR[\omega ]$ 
  from the space of metrics to 
  an open ball in the real projective space $\mathbb{P} (H^2(M, \RR))$ is sometimes
  called the {\em period map} of $M$. If $g$ happens to be a K\"ahler-Einstein metric
  with $\lambda > 0$, we would then of course know that $\RR [\omega ] = \RR c_1$.
 But even if $g$ is  allowed to be an {\em arbitrary} Einstein metric 
 on a del Pezzo surface, then, provided that 
  $[g]$ is assumed to be of symplectic type, we will  show that the lines 
  $\RR [\omega ]$ and $\RR c_1(M,\omega)$ cannot be too far apart.

 \begin{thm} \label{yellowstone}
 If  a conformal class $[g]$  on a 
 del Pezzo surface  $M$ is of symplectic type and satisfies 
 $$\frac{(c_1\cdot [\omega ])^2}{[\omega]^2} \geq \frac{3}{2} c_1^2 (M)$$
 then $[g]$ is not the conformal class of   an Einstein metric. 
 \end{thm}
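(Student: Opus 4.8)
The plan is to argue by contradiction. Suppose $[g]$ is of symplectic type on the del Pezzo surface $M$, satisfies $(c_1\cdot[\omega])^2/[\omega]^2\ge\frac32 c_1^2(M)$, and is the conformal class of an Einstein metric $h$; I will squeeze the conformal invariant $\int_M|W_+|^2\,d\mu$ between two incompatible bounds. Since a del Pezzo surface is rational or ruled with $2\chi+3\tau>0$, and since any compact almost-complex $4$-manifold satisfies $c_1^2=2\chi+3\tau$, we have $c_1^2(M)=(2\chi+3\tau)(M)>0$. First I would feed the Einstein representative $h$ into the Gauss--Bonnet and signature formulas, the latter being the identity $\tau=\frac1{12\pi^2}\int_M(|W_+|^2-|W_-|^2)\,d\mu$ recalled in the introduction: setting $\mathring r\equiv 0$, these combine into the well-known relation
$$(2\chi+3\tau)(M)=\frac1{4\pi^2}\int_M\Big(2|W_+|^2+\tfrac{s^2}{24}\Big)\,d\mu ,$$
whence
$$\int_M|W_+|^2\,d\mu = 2\pi^2 c_1^2(M)-\frac1{48}\int_M s^2\,d\mu \ \le\ 2\pi^2 c_1^2(M),$$
with equality if and only if $h$ is Ricci-flat.

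Next I would invoke Theorem~\ref{window}, which applies because $[g]$ is a symplectic conformal class on a rational or ruled surface with $2\chi+3\tau\ge 0$: it yields
$$\int_M|W_+|^2\,d\mu\ \ge\ \frac{4\pi^2}{3}\,\frac{(c_1\cdot[\omega])^2}{[\omega]^2}\ \ge\ \frac{4\pi^2}{3}\cdot\frac32\, c_1^2(M)\ =\ 2\pi^2 c_1^2(M),$$
the middle step being precisely the hypothesis. Comparing with the previous display forces every inequality to be an equality. In particular $h$ is Ricci-flat, so $s_h\equiv 0$; and, by the equality clause of Theorem~\ref{window}, the conformal class $[g]$ must also contain a K\"ahler metric $g_0$ of constant \emph{positive} scalar curvature.

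The hard part --- indeed the only non-formal step --- is to see that this is absurd: $[g]$ would contain both a metric $h$ with $s\equiv 0$ and a metric $g_0$ with $s$ a positive constant. I would rule this out using the conformal invariance of the sign of the lowest eigenvalue of the conformal Laplacian $L$ (the ``Yamabe sign''): for $h$, $L$ is a positive multiple of the nonnegative Laplacian, so its lowest eigenvalue is $0$; for $g_0$, one has $\langle Lu,u\rangle\ge\int_M s_{g_0}\,u^2\,d\mu>0$ for every $u\not\equiv 0$, so its lowest eigenvalue is strictly positive --- contradicting conformal invariance. (Alternatively, since $g_0$ is K\"ahler and conformal to the Einstein metric $h$, Derdzi\'nski's theorem forces $h$ to be, up to constant rescaling, itself K\"ahler--Einstein --- hence with $\lambda>0$ on a del Pezzo surface, so $s_h>0$ --- because the non-trivial case of that theorem requires $s_{g_0}$ to be non-constant, which it is not. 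Either way $s_h\not\equiv 0$, contradicting Ricci-flatness.) This contradiction shows that $[g]$ cannot be the conformal class of an Einstein metric, as claimed; as a by-product, the inequality in the hypothesis is in fact automatically strict for the conformal class of any Einstein metric of symplectic type on a del Pezzo surface.
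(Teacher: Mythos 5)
Your argument is correct and is essentially the paper's own proof: both combine the Gauss--Bonnet/signature bound $\int_M|W_+|^2\,d\mu\le 2\pi^2c_1^2(M)$ for Einstein metrics with the lower bound of Theorem~\ref{window}, and then exclude the equality case because a Ricci-flat metric cannot be conformal to a metric of positive scalar curvature. The only differences are cosmetic (contradiction versus contraposition), and you usefully spell out the Yamabe-sign argument that the paper merely asserts.
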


  \begin{proof}
  The Gauss-Bonnet-type formula 
  $$(2\chi + 3\tau)(M) = \frac{1}{4\pi^2}
  \int_M \left( \frac{s^2}{24}+2|W_+|^2 -\frac{|\mathring{r}|^2}{2}
  \right) d\mu $$
  implies that any Einstein metric on a compact  almost-complex $4$-manifold must satisfy 
  \begin{equation}
\label{taut}
   \int_M |W_+|^2 d\mu \leq 2\pi^2 c_1^2 (M) ,
\end{equation}
  with equality only if the Einstein metric is Ricci-flat. 
  Combining this with the inequality of Theorem \ref{window} thus yields
  \begin{equation*}
\label{patrol}
 \frac{(c_1\cdot [\omega ])^2}{[\omega ]^2} < \frac{3}{2} c_1^2~,
\end{equation*}
  where the inequality is necessarily strict because a Ricci-flat metric 
 cannot be conformally related to a metric of  positive scalar curvature. The result therefore follows by contraposition. 
  \end{proof}
  
  \begin{xpl}
  Let $[g]$ be a conformal class of symplectic type on $M=S^2\times S^2$, with 
  self-dual harmonic $2$-form $\omega$. By moving $[g]$ by a diffeomorphism
  and rescaling $\omega$ if necessary, we may assume \cite{lalmcd}  that $[\omega ]$ 
  is Poincar\'e dual to $F_1 + t F_2$  and  that $c_1$ is 
 is Poincar\'e dual to  $2F_1 + 2 F_2$,
 where $F_1 = S^2 \times \{ pt\}$,
  $F_2 = \{ pt\} \times S^2$, and $t\geq 1$.  Theorem \ref{yellowstone} then tells us that 
  $[g]$ cannot be the conformal class of an Einstein metric if 
  $$
  \frac{(2t+2)^2}{2t}\geq 12 ,
  $$
  which  happens whenever $t\geq 2 + \sqrt{3}$. 
  \end{xpl}

  In the toric setting,  Theorem \ref{yellowstone} can be improved as follows:
  
 \begin{thm} \label{moonstone}
 Let $M$ be a toric complex surface, and let $[g]$ be a $T^2$-invariant conformal class on $M$ 
 of symplectic type. If $[g]$  also satisfies 
 $$\frac{(c_1\cdot [\omega ])^2}{[\omega]^2} +{\frac{1}{32\pi^2}}\| \mathcal{F} ([\omega ])\|^2 \geq \frac{3}{2} c_1^2 (M)$$
 then $[g]$ is not the conformal class of   an Einstein metric. Here $\|\mathcal{F}\|^2$ once again 
 denotes the norm-square of the Futaki invariant.
 \end{thm}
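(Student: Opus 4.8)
The plan is to run the argument of Theorem \ref{yellowstone} essentially verbatim, but to feed in the sharper toric lower bound of Theorem \ref{two} in place of the one derived from Theorem \ref{window}. I would argue by contraposition: assuming that $[g]$ is the conformal class of an Einstein metric $g$, I would show that the displayed inequality must fail, and in fact fail strictly. The opening observation is that, since $M$ carries a $T^2$-invariant symplectic form, $b_+(M)=1$ and the self-dual harmonic $2$-form $\omega$ of $[g]$ spans a time-like line, and that passing to the almost-K\"ahler representative of $[g]$ leaves the conformally invariant quantity $\int_M|W_+|^2\,d\mu$ unchanged.

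The heart of the proof is then a two-sided squeeze on $\int_M |W_+|^2\, d\mu$. Since $[g]$ is $T^2$-invariant and of symplectic type, Theorem \ref{two}, combined with the dimension-$4$ formula \eqref{version2} for the virtual action, gives
\[
\int_M |W_+|^2\, d\mu \;\geq\; \frac{4\pi^2}{3}\left( \frac{(c_1\cdot[\omega])^2}{[\omega]^2} + \frac{1}{32\pi^2}\|\mathfrak{F}([\omega])\|^2 \right).
\]
On the other side, the Gauss-Bonnet-type identity used in the proof of Theorem \ref{yellowstone} shows that any Einstein metric on a compact almost-complex $4$-manifold obeys \eqref{taut}, namely $\int_M|W_+|^2\,d\mu \le 2\pi^2 c_1^2(M)$, with equality only in the Ricci-flat case. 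Chaining these two bounds immediately yields
\[
\frac{(c_1\cdot[\omega])^2}{[\omega]^2} + \frac{1}{32\pi^2}\|\mathfrak{F}([\omega])\|^2 \;\leq\; \frac{3}{2}\,c_1^2(M),
\]
and the theorem follows by contraposition once this inequality is shown to be strict.

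The strictness, together with confirming that Theorem \ref{two} is genuinely available, is the step I expect to be the main obstacle. Equality throughout would, via \eqref{taut}, force the Einstein representative of $[g]$ to be Ricci-flat, hence scalar-flat, while the equality clause of Theorem \ref{two} would simultaneously place an extremal K\"ahler metric $g_{K}$, with K\"ahler form $\omega$, in the same conformal class $[g]$. On the underlying manifold of a del Pezzo surface one has $c_1\cdot[\omega]>0$ (Lemma \ref{limpid}), so $\int_M s_{g_{K}}\,d\mu = 4\pi\,c_1\cdot[\omega]>0$; thus $g_{K}$ is not scalar-flat, hence not a constant rescaling of the Einstein metric, and Derdzi{\'n}ski's theorem on conformally Einstein K\"ahler surfaces then forces $s_{g_{K}}>0$ everywhere --- so $[g]$ has positive Yamabe constant and cannot contain the scalar-flat Einstein metric, a contradiction. (One could equivalently note that a simply connected del Pezzo surface is not homeomorphic to a $K3$ surface, and so admits no Ricci-flat metric at all.) The remaining delicacy is that the proof of Theorem \ref{two} rests on Lemma \ref{positive}, which was established only for toric del Pezzo surfaces; I would therefore either restrict attention to that case from the start, or first observe that a toric complex surface carrying a $T^2$-invariant Einstein conformal class of symplectic type is necessarily del Pezzo, since such an Einstein metric is conformally K\"ahler and hence has $c_1>0$ by \cite{lebhem}.
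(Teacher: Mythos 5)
Your proof has the same skeleton as the paper's: argue by contraposition, sandwich $\int_M|W_+|^2\,d\mu$ between the toric lower bound of Theorem \ref{two} (in the form \eqref{version2}) and the Gauss--Bonnet upper bound \eqref{taut}, and then show the resulting inequality is strict. The only genuine divergence is in how strictness is obtained. The paper does it in one line on the upper-bound side: a toric conformal class containing an Einstein metric forces that metric to admit (conformal) Killing fields, and Lichnerowicz's Bochner argument shows that a compact Ricci-flat manifold with $b_1=0$ has none; since toric surfaces are simply connected, the Einstein metric cannot be Ricci-flat and \eqref{taut} is strict. You instead analyze the simultaneous equality case: equality would force both a Ricci-flat Einstein representative and, via the equality clause of Theorem \ref{two}, an extremal K\"ahler representative with $\int s\,d\mu = 4\pi\, c_1\cdot[\omega]>0$; Derdzi\'nski then gives $s>0$ everywhere, which is incompatible with a scalar-flat metric in the same conformal class because the sign of the Yamabe constant is conformally invariant. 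This is valid and self-contained, though longer than the paper's route and dependent on the equality discussion of Theorem \ref{two} rather than just its inequality.

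Two cautions. First, your parenthetical alternative --- that a simply connected del Pezzo surface ``is not homeomorphic to a $K3$ surface, and so admits no Ricci-flat metric at all'' --- invokes a classification of simply connected compact Ricci-flat $4$-manifolds that does not exist; whether, say, $\CP_2$ admits a Ricci-flat metric is an open problem, so this remark should be deleted and the conclusion drawn from your main argument (or from Lichnerowicz) instead. Second, you rightly observe that Theorem \ref{two} rests on Lemma \ref{positive}, which is established only for toric \emph{del Pezzo} surfaces; but your proposed repair --- that an Einstein metric in a $T^2$-invariant conformal class of symplectic type ``is conformally K\"ahler and hence has $c_1>0$'' --- is unjustified, since nothing forces such an Einstein metric to be Hermitian with respect to an integrable complex structure, which is the hypothesis needed to invoke \cite{lebhem}. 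The paper itself implicitly restricts to the del Pezzo case (its proof opens with ``Any del Pezzo surface is simply connected''), so your first option, restricting attention to toric del Pezzo surfaces from the start, is the correct resolution.
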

 
\begin{proof} Any del Pezzo surface is simply connected. 
Since $g$ is an  Einstein metric with {\em toric}  conformal class  $[g]$, 
it  therefore cannot be Ricci-flat, 
 because  a Bochner-type  argument due to Lichnerowicz 
 \cite{lichconf} shows that
 a compact Ricci-flat manifold with $b_1=0$ cannot admit  conformal Killing fields. 
Hence   strict inequality holds in  \eqref{taut}. Combining 
this inequality with  Theorem \ref{two} now yields 
\begin{equation}
\label{control}
\frac{(c_1\cdot [\omega ])^2}{[\omega]^2} +{\frac{1}{32\pi^2}}\| \mathcal{F} ([\omega ])\|^2 < \frac{3}{2} c_1^2 (M),
\end{equation}
 and the result   follows by contraposition. 
\end{proof}

  Note that the inequality \eqref{control}   plays  an important role in the existence 
  theory of Einstein, 
  conformally K\"ahler metrics, where the 
   region it defines  is the called {\em controlled cone} \cite{chenlebweb,lebhem10}.

\section{Problems and Prospects}
\label{speculations}

  Theorems \ref{keystone} and \ref{capstone}, together with Gursky's Theorem, 
  provide  interesting evidence in favor of the following:
  
  \begin{conj} Let $M$ be a smooth compact Einstein metric $g$ with positive Einstein constant.
  Suppose, moreover, that $g$ is Hermitian with respect to some integrable complex structure
  $J$ on $M$. Then the conformal class $[g]$ is an absolute minimizer of the Weyl functional 
  $\mathscr{W}$. Moreover, for the given $M$, every absolute minimizer arises in this manner. 
  \end{conj}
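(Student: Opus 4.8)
The plan is to recast the conjecture as a single curvature-minimization statement and then to establish the required lower bound region by region, leaning on the three regimes for which sharp estimates are already available. Since $M$ carries a Hermitian Einstein metric of positive Einstein constant, it is the underlying $4$-manifold of a del Pezzo surface; fix the complex orientation, so $b_+(M)=1$. Because $\mathscr{W}([g])=-12\pi^2\tau(M)+2\int_M|W_+|^2\,d\mu$ with $\tau$ topological, the conjecture is equivalent to: $\int_M|W_+|^2\,d\mu$ attains its infimum over \emph{all} conformal classes on $M$ exactly at the conformal class $[g_0]$ of the unique conformally-Einstein extremal Kähler metric, whose value is $\tfrac{4\pi^2}{3}\mathcal A_{\min}(M)$, where $\mathcal A_{\min}(M)=c_1^2(M)=(2\chi+3\tau)(M)$ if $g_0$ is Kähler-Einstein and $\mathcal A_{\min}(M)=\min_{\mathcal K}\mathcal A$ for the two exceptional del Pezzo surfaces. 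So one must show (a) $\int_M|W_+|^2\,d\mu\ge\tfrac{4\pi^2}{3}\mathcal A_{\min}(M)$ for every $[g]$, and (b) that equality forces $[g]=[g_0]$.

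First I would stratify the conformal classes. On $\{\,Y([g])>0\,\}$, Gursky's theorem \cite{gursky} gives $\int|W_+|^2\ge\tfrac{4\pi^2}{3}c_1^2(M)$, with equality iff Kähler-Einstein --- already the sharp bound for every del Pezzo surface except $\CP_2\#\overline{\CP}_2$ and $\CP_2\#2\overline{\CP}_2$. On the symplectic-type classes, Theorem~\ref{cornerstone} gives $\int|W_+|^2\ge\tfrac{4\pi^2}{3}(c_1\cdot[\omega])^2/[\omega]^2$, which Theorem~\ref{two} upgrades, for $T^2$-invariant classes, to $\int|W_+|^2\ge\tfrac{4\pi^2}{3}\mathcal A([\omega])\ge\tfrac{4\pi^2}{3}\mathcal A_{\min}$ --- the sharp bound, with the conjectured equality case. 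The conformal classes of the Page metric and of the Einstein metric on $\CP_2\#2\overline{\CP}_2$ are conformally Kähler, hence lie in both open sets, and these metrics realize $\mathcal A_{\min}$. The conjecture therefore reduces to: (i) removing the $T^2$-invariance hypothesis from Theorem~\ref{two} on the two exceptional del Pezzo surfaces (and correspondingly sharpening Gursky's bound there from $c_1^2$ to $\mathcal A_{\min}$ on $\{Y>0\}$); and (ii) proving $\int|W_+|^2\ge\tfrac{4\pi^2}{3}\mathcal A_{\min}(M)$ on the remaining ``bad'' region of conformal classes that are neither of positive Yamabe type nor of symplectic type.

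For step (i) the crucial sub-lemma is that, for an almost-Kähler metric $g$ compatible with a symplectic form $\omega$ with $[\omega]\in\mathcal K$, the $L^2$-projection $F$ of the Hermitian scalar curvature onto $\mathfrak K$ still satisfies $\int_M F^2\,d\mu=32\pi^2\mathcal A([\omega])$ \emph{even when $g$ is not $T^2$-invariant} --- equivalently, that $\int_M f\varsigma\,d\mu$ depends only on $[\omega]$ for every $f$ pulled back from an affine-linear function on the moment polytope. This is where toricity is currently used: Lejmi's formula \eqref{lej} rests on the Abreu-type identity $\varsigma=-\sum_{jk}\partial^2 H^{jk}/\partial x^j\partial x^k$, which presupposes a $T^2$-invariant metric. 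I would try to replace it by a metric-independent Stokes/Chern--Weil argument generalizing \eqref{blair}, writing $\varsigma\,d\mu=2i\,F_{K^{-1}}\wedge\omega$ and using that two anti-canonical curvatures differ by an exact $2$-form, on a simply connected manifold where $d\mu$ is the Liouville volume of $\omega$. If this identity holds, the proof of Theorem~\ref{two} carries over verbatim (Lemma~\ref{positive} already only used the moment polygon); if not, step (i) itself becomes a serious obstacle requiring a different approach.

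Step (ii) is the main obstacle, and I do not expect it to fall without a new idea. On a del Pezzo surface the natural lower-bound technology is Seiberg--Witten theory in the $b_+=1$ chamber, as in Lemma~\ref{limpid}; but the $C^0$ spinor estimate there yields bounds on $\int|W_+|^2$ only of order $\tfrac{2\pi^2}{3}c_1^2$, a factor of two short of the conjectured $\tfrac{4\pi^2}{3}c_1^2$ --- the missing factor being precisely the ``Kähler improvement'' supplied by the symplectic-type hypothesis via Lemma~\ref{key} --- and even that weak estimate requires a chamber-crossing argument, since del Pezzo surfaces admit positive-scalar-curvature metrics. A more promising route is variational: run the direct method for $\mathscr{W}$, show that a minimizing sequence of conformal classes subconverges modulo finitely many conformally-flat bubbles (which carry zero Weyl energy) to a conformal class solving the Bach-type Euler--Lagrange equation, and then use $4$-dimensional rigidity --- via Derdzi{\'n}ski's theorem \cite{derd}, Chen's resolution of the Calabi conjecture \cite{xxel}, and Bando--Mabuchi-type uniqueness \cite{bama} --- to identify that minimizer as conformal to an Einstein Hermitian metric, which would also yield clause (b). But the genuinely hard part is the same in any guise: \emph{excluding} a conformal class with $\int|W_+|^2$ strictly between the Seiberg--Witten bound and $\tfrac{4\pi^2}{3}\mathcal A_{\min}(M)$, i.e.\ proving the corresponding energy gap is empty. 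That is where essentially all of the difficulty of the conjecture is concentrated.
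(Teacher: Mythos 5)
The statement you are addressing is the Conjecture of \S\ref{speculations}, and the paper does not prove it; the author states explicitly that ``the techniques developed in this article are by no means sufficient to prove such a result.'' Your submission is accordingly not a proof but a research program, and to your credit you say so: you concede that step (i) hinges on an unverified identity and that step (ii) ``is the main obstacle'' requiring ``a new idea.'' Those concessions are exactly where the conjecture remains open, so the proposal cannot be accepted as a proof. Concretely, the fatal gap is your step (ii): a generic conformal class on a del Pezzo surface is neither of positive Yamabe type nor of symplectic type --- the harmonic self-dual form $\omega$ vanishes along a union of circles (this is the phenomenon the paper cites from \cite{kirbyperiod,perutz,taubescirc}), so the almost-K\"ahler structure is incomplete, the ``first Chern class'' jumps as circles are created or annihilated, and neither Lemma~\ref{key} nor the chamber argument of Lemma~\ref{limpid} produces the sharp constant $\frac{4\pi^2}{3}\mathcal{A}_{\min}$ there. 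Your proposed direct method for $\mathscr{W}$ with ``conformally-flat bubbles'' is not a known compactness theory for the Weyl functional and cannot be invoked as if it were. Step (i) also has a real gap: Lejmi's formula \eqref{lej}, and hence the identity $\int_M[\thor(\varsigma)]^2 d\mu = 32\pi^2\mathcal{A}([\omega])$, genuinely uses the $T^2$-invariance of the metric (the functions $H^{jk}=g(\xi^j,\xi^k)$ must descend to the polytope), and the Chern--Weil identity \eqref{blair} only recovers the pairing of $\varsigma$ against \emph{constants}, not against all of $\mathfrak{K}$; your hoped-for metric-independent replacement is not supplied.

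That said, your stratification of the problem is accurate and consistent with the paper's own framing: Gursky's theorem covers $\{Y>0\}$, Theorems~\ref{cornerstone} and \ref{two} cover the symplectic-type and toric symplectic-type classes, the two exceptional surfaces $\CP_2\#\overline{\CP}_2$ and $\CP_2\#2\overline{\CP}_2$ require $\mathcal{A}_{\min}>c_1^2$ rather than $c_1^2$, and the reduction of the equality case to Proposition~\ref{behold} together with \cite{derd,xxel,bama} is the right rigidity input. Presented as an analysis of what is known and what is missing, this is a correct and useful reading of the paper; presented as a proof of the Conjecture, it is not one.
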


Of course, the techniques developed in this article are by no means 
sufficient to prove such a result. 
For a generic conformal class on $M$, the corresponding harmonic self-dual $2$-form $\omega$ 
will vanish along a union of circles \cite{kirbyperiod,perutz,taubescirc}, 
resulting in an incomplete almost-K\"ahler structure
on the complement, and determining a first Chern class which changes as circles are 
created or annihilated. While one might hope to systematically extend the techniques 
developed here to cope with these difficulties, it remains to be seen whether such a direct 
assault on the problem could actually work. An interesting related problem would
 be to  determine whether 
such circles can be introduced without forcing the Yamabe invariant to  become negative. 

Extending the estimate of  Theorem \ref{capstone} to non-toric conformal classes would be 
 desirable, but is technically extremely daunting. In the K\"ahler case, the corresponding
statement was proved by X.X. Chen \cite{xxel}, but the ideas involved are unlikely to 
generalize in any direct way to the 
almost-K\"ahler setting. 
A small but interesting first step, however, might be to  simply prove that 
Theorem \ref{capstone} still holds if the toric structure is no longer fixed, but is allowed to 
range over all possible toric structures on the fixed $4$-manifold. Here the 
main difficulty is that our proof of 
 Theorem  \ref{two} depends on the fact that $\thor (\varsigma )$ is everywhere non-negative, 
 and this will simply not be true for most toric structures. 
 
 On the other hand, it
 is  worth noting  that 
any toric conformal class automatically satisfies $c_1\cdot [\omega ] > 0$,
even if $c_1^2 < 0$, simply because $c_1$ is  Poincar\'e dual to the bracelet of 
symplectic $2$-spheres represented by the boundary of the moment polygon. For
this reason, 
Proposition \ref{one} is broadly applicable in the toric setting, and might be a source of
useful related  results.

\appendix
\section{Appendix: The Yamabe Invariant}
\label{appa}

In this appendix, we contrast  Gursky's Theorem \cite{gursky} with Theorem \ref{keystone}
by showing that ``most'' symplectic conformal classes on any $4$-manifold have negative
Yamabe constant. For related results, see \cite{jongsuak,jongsuak2}.

\begin{prop} \label{neg}
Let $(M^4 ,\omega)$ be any compact symplectic $4$-manifold. 
Then there are sequences of conformal classes $[g_k]$ on $M$ which 
are compatible with $\omega$, and have Yamabe constants $Y([g_k]) \to -\infty$. 
Moreover, among all\ $\omega$-compatible conformal classes, those with negative
Yamabe constant are dense in the $C^0$ topology. 
\end{prop}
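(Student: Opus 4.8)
The plan is to construct, near any given $\omega$-compatible conformal class, explicit nearby conformal classes with very negative Yamabe constant. The starting observation is that the Yamabe constant of a conformal class $[g]$ on a compact $4$-manifold is negative precisely when $[g]$ admits a metric of negative scalar curvature with negative total scalar curvature; in fact on a $4$-manifold, $Y([g]) < 0$ iff the first eigenvalue of the Yamabe operator $6\Delta + s$ is negative, and moreover if $g$ is any metric in the class then $Y([g]) \leq \left( \int_M s_g\, d\mu_g \right) / \mathrm{Vol}(g)^{1/2}$ whenever $s_g \leq 0$; more generally the sign of $Y([g])$ is governed by whether one can make the total scalar curvature functional negative within the class. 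So the core task reduces to exhibiting $\omega$-compatible metrics whose scalar curvature is negative enough.

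First I would recall the local-to-global deformation: fix a point $p \in M$ and a small Darboux ball around $p$, and perform a connected-sum-type or local-surgery construction that glues in a long almost-K\"ahler neck of negative Hermitian scalar curvature while remaining compatible with $\omega$ (up to isotopy, which by Moser's theorem does not change the symplectic form). Concretely, since the almost-complex structures compatible with a fixed $\omega$ form a contractible space, one has enormous freedom: one can choose $J_k$ agreeing with a fixed $J_0$ outside a ball but, inside, pulled back from a metric on a large region of a hyperbolic (constant negative curvature) or otherwise negatively-curved almost-K\"ahler model. As the size of this region grows, $\int_M s_{g_k}\, d\mu_{g_k} \to -\infty$ while the volume stays controlled, or can itself be controlled, forcing $Y([g_k]) \to -\infty$ by the variational characterization. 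The key input making this legitimate is that we only need \emph{compatibility with} $\omega$, not that the metric be almost-K\"ahler in the normalized sense of Section 1; but since any $\omega$-compatible metric is conformal to an almost-K\"ahler one and Yamabe constant is a conformal invariant, working with $\omega$-compatible metrics directly is the same as working with the associated conformal classes of symplectic type.

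For the density statement, I would argue as follows: given any $\omega$-compatible conformal class $[g]$ and any $\varepsilon > 0$, one performs the above modification supported in a ball of radius $\delta \ll \varepsilon$, changing the metric by an amount that is $C^0$-small globally (the neck is thin even though it is "long" in an intrinsic sense, because we can simultaneously rescale so the whole modification lives in a $C^0$-small perturbation of the original metric tensor) yet still pushing the relevant eigenvalue of the Yamabe operator below zero — here one uses that a single well-placed region of sufficiently negative scalar curvature suffices to make the first eigenvalue of $6\Delta + s$ negative, by testing with a bump function supported there. Since $C^0$-closeness of metrics implies $C^0$-closeness of the induced conformal classes, this gives density of the negative-Yamabe locus among $\omega$-compatible conformal classes.

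The main obstacle I expect is the bookkeeping of the gluing: one must verify that the modified $J_k$ is genuinely compatible with $\omega$ (not merely tamed), that the resulting metric tensors converge in $C^0$ to the original, and that the scalar curvature contribution from the neck dominates any positive curvature concentrated in the transition region where the model is interpolated to $g$. This transition-region estimate — showing the "gluing error" in the scalar curvature is of lower order than the $-\infty$ drift coming from the model — is the technical heart, and it is where one would either invoke a standard neck-stretching lemma or carry out an explicit interpolation with bounds on $|s|$ in terms of the interpolation scale.
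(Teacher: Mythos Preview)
Your proposal has the right general shape (local modification of $J$ in a Darboux ball) but misses the key identity that makes the argument work, and the specific mechanism you suggest has a scaling problem.

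The paper's proof hinges on Blair's formula \eqref{blair} in the form
\[
\int_M s_g \, d\mu_g \;=\; 4\pi\, c_1\cdot[\omega] \;-\; \tfrac{1}{2}\int_M |\nabla\omega|^2 \, d\mu_g ,
\]
valid for any almost-K\"ahler metric adapted to $\omega$.  Since the first term on the right is a fixed topological constant and the volume $[\omega]^2/2$ is also fixed, it suffices to produce $\omega$-compatible $J_k$ with $\|\nabla\omega\|_{L^2}\to\infty$; equivalently, with $\|N_{J_k}\|_{L^2}\to\infty$.  This is done by an explicit high-frequency oscillation: in a Darboux chart one sets $f_k = \tfrac{1}{k}\sin(2\pi k^2 v/\varepsilon)\,\phi$ and twists $J$ by $f_k$.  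The amplitude $1/k$ forces $g_k\to g_0$ in $C^0$, while the frequency $k^2$ makes $\partial_v f_k$ (hence a component of $N_{J_k}$) blow up in $L^2$.  No negatively curved model space is needed at all.

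Your neck-gluing picture, by contrast, runs into the following difficulty.  A hyperbolic or otherwise negatively curved model has \emph{bounded} scalar curvature; to make $\int s\,d\mu$ very negative you need large volume in the model.  But if you then rescale the model to sit inside a tiny ball and be $C^0$-close to the original metric, the contribution $\int s\,d\mu$ rescales by a positive power of the scale factor and becomes \emph{small}, not large.  The only way to get $C^0$-small metric perturbations with unbounded total scalar curvature is to let second derivatives blow up --- which is exactly what the oscillation trick does, and what your proposal does not supply.  Your acknowledged ``technical heart'' (controlling the gluing error) is therefore not a matter of bookkeeping: as stated, the construction does not produce the claimed drift $\int s\to -\infty$ while staying $C^0$-small.

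A minor point: the inequality $Y([g]) \le \int_M s_g\,d\mu_g \big/ \mathrm{Vol}(g)^{1/2}$ holds for \emph{every} $g\in[g]$, not only when $s_g\le 0$; this is immediate from the definition of $Y$ as an infimum, and is exactly what the paper uses.
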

\begin{proof}
The Yamabe constant of a conformal class $[g]$ on a compact $4$-manifold $M$ 
is given by 
$$Y([g]) = \inf_{g^\prime \in [g]} \frac{\int_M s_{g^\prime}d\mu_{g^\prime}}{\sqrt{\int_M ~d\mu_{g^\prime}}} ,$$
so it suffices to produce a sequence of almost-K\"ahler metrics $g_k$  adapted
to $\omega$ such that $\int s_{g_k}~d\mu_{g_k}\to -\infty$; here we are using the fact that 
all such metrics have the same volume form $d\mu=\omega^2/2$, and hence the same total volume. On the other hand, for any almost-K\"ahler metric $g$,
$$
4\pi c_1\cdot [\omega ] = \int_M \frac{s+s^*}{2}d\mu =  \int_M \left(s+  \frac{|\nabla \omega|^2}{2}\right) d\mu
$$
so it suffices to show that we can choose $g_k$ adapted to $\omega$ so that 
$$\| \nabla \omega \|_{L^2}\to +\infty.$$

Of course, a choice of almost-K\"ahler metric $g$ compatible with $\omega$ is 
equivalent to the choice of an almost-complex structure $J$ such that 
$\omega$ is $J$-invariant; the metric is then given by 
$$g = \omega (\cdot , J \cdot ), $$
and, with respect to this metric, $J$ then  just becomes $\omega$ with an index raised.
The Nijenhuis tensor $N_J$, given by 
$$N_J (X,Y)=( \nabla_X J)(Y)-(\nabla_Y J)(X)-( \nabla_{JX} J)(JY)+(\nabla_{JY} J)(JX)$$
is 
 then just  four times the projection of $\nabla J$ to $(\Lambda^{0,2}_J\oplus \Lambda^{2,0}_J)\otimes TM$, and represents the O'Neill tensor of $T^{0,1}_J$ in that 
 $$[X+iJX,Y+iJY]^{1,0}=J N_J(X,Y)+iN_J(X,Y).$$
It thus suffices to produce a sequence $J_k$ of almost-complex structures with  
$J_k^*\omega=\omega$ which satisfy 
$$\|N_{J_k}\|_{L^2}\to \infty ,$$
where the norms are to be computed with respect to the associated sequence of 
metrics 
$$g_k = \omega (\cdot , J_k \cdot ).$$

This can be done via an entirely local construction. First,  choose an arbitrary background
almost-K\"ahler metric $g$ adapted to $\omega$, which amounts to  choosing 
a background $\omega$-compatible almost-complex structure $J$. Now  take a Darboux
chart $(x,y,u,v)$ near an arbitrary point $x\in M$, so that 
\begin{equation}\label{darboux}
\omega = dx\wedge dy + du\wedge dv
\end{equation}
and such that $J$ coincides with the standard Euclidean almost-complex structure 
at the origin, which  represents $x$  
in these coordinates. Next,  by freezing the almost-complex structure near the origin, introduce a perturbed back-ground almost-K\"ahler metric 
$g_0=g_{0,\varepsilon}$ which  agrees with the standard Euclidean metric on, say, the
coordinate ball of radius $3\varepsilon$ about the origin, but agrees with $g$ outside
the ball of radius $4\varepsilon$; and notice that we can do this so that $g_{0,\varepsilon}\to 
g$ in the $C^0$ topology as $\varepsilon \to 0$. Next, for any fixed $\varepsilon$, notice that we  can  construct a
new almost-K\"ahler metric $g_f$ associated with an almost complex structure
given by 
$$
J_f= e^{2f} dx \otimes \frac{\partial}{\partial y}- e^{-2f} dy \otimes \frac{\partial}{\partial x}
+ du \otimes \frac{\partial}{\partial v}-  dv \otimes \frac{\partial}{\partial u}
$$
on the $(3\varepsilon)$-ball, where $f$ is a smooth function supported in the ball 
of radius $2\varepsilon$; we then extend $J$ to all of $M$ by taking it to coincide with 
the almost-complex structure $J_0$ of  $g_0$ outside the 
$(2\varepsilon)$-ball. The corresponding metric $g_f$ is given by 
\begin{equation}
\label{ansatz}
g_f= e^{2f}dx^2 + e^{-2f}dy^2 + du^2 + dv^2
\end{equation}
on the region in question, and 
$$(e^fdx+ie^{-f}dy) \left(\left[e^{-f} \frac{\partial}{\partial x} + i e^f  \frac{\partial}{\partial y}, \frac{\partial}{\partial u} +i \frac{\partial}{\partial v}\right]\right)= 2\left(\frac{\partial}{\partial u} +i \frac{\partial}{\partial v}\right) f$$
is a component of $JN_J+iN_J$ in an orthonormal frame. Now let $g_k=g_{k,\varepsilon}$ be  the sequence 
of almost-K\"ahler metrics $g_{f_k}$  associated with the sequence of functions $f_k =  \frac{1}{k} \sin (\frac{2\pi k^2}{\varepsilon} v) \phi$, where the cut-off function $\phi : M\to [0,1]$
 is supported in the $(2\varepsilon)$-ball and 
  $\equiv 1$ on 
the $\varepsilon$-ball. The Nijenhuis tensors
of the corresponding almost-complex structures then satisfy 
$$\| N_{J_k}\|_{L^2}^2 > \int_{[-\frac{\varepsilon}{2}, \frac{\varepsilon}{2}]^4}
 |\frac{\partial f}{\partial v}|^2~ \frac{\omega^2}{2} 
= 2\pi^2 k^2 \varepsilon^4 \to + \infty
$$
with respect to the associated metrics. 

In particular, for fixed $\varepsilon$, the Yamabe constant of
$[g_k] = [g_{k,\varepsilon}]$ is negative for all large $k$. However, 
 the above choice of $f_k$ converges to $0$ in $C^0$,
so that $g_{k,\varepsilon}\to g_{0,\varepsilon}$ in the $C^0$ topology as $k\to \infty$. On the other hand,
we also have $g_{0,\varepsilon}\to g$ in the $C^0$ topology  as $\varepsilon \to 0$. 
By choosing a suitably large $k(j)$ for each $\varepsilon = 2^{-j}$,
we therefore obtain a sequence of $\omega$-compatible metrics converging to the given 
$\omega$-compatible metric  $g$
in $C^0$, even though  
their  conformal classes   have negative Yamabe constants. 
\end{proof}

In fact, Jongsu Kim \cite{jongsuak,jongsuak2} has proved much stronger and more difficult
results in this direction. Indeed,  he shows that one can always construct $\omega$-compatible 
almost-K\"ahler metrics $g$ for which the scalar curvature is everywhere negative, 
even without conformal rescaling. 

Finally, we point out that this  phenomenon  persists in the toric setting:

\begin{prop} Let $(M^4,\omega)$ be a Hamiltonian $T$-space. Then there are 
$\omega$-compatible, 
$T^2$-invariant  almost-K\"ahler  metrics $g_k$ such that $Y([g_k]) \to -\infty$.
\end{prop}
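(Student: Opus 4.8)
The plan is to adapt the construction used in the proof of Proposition~\ref{neg}, carrying out every step in a $T^2$-equivariant fashion. The starting point is again the identity valid for any $\omega$-compatible almost-K\"ahler metric $g$ on $M^4$,
$$
\int_M s_g\, d\mu = 4\pi\, c_1\cdot[\omega] - \tfrac12\,\|\nabla\omega\|_{L^2(g)}^2,
$$
together with the fact that all such metrics share the volume form $\omega^2/2$, and hence the same total volume $[\omega]^2/2$. Since $Y([g]) \leq \big(\int_M s_g\, d\mu\big)\big/\big(\int_M d\mu\big)^{1/2}$, it suffices to produce a sequence of $T^2$-invariant, $\omega$-compatible almost-K\"ahler metrics $g_k$ with $\|\nabla\omega\|_{L^2(g_k)}\to\infty$; and since, for an almost-K\"ahler metric, $\|\nabla\omega\|_{L^2}$ is comparable to the $L^2$-norm of the Nijenhuis tensor of the associated almost-complex structure, it is enough to find $T^2$-invariant, $\omega$-compatible almost-complex structures $J_k$ with $\|N_{J_k}\|_{L^2(g_k)}\to\infty$, where $g_k = \omega(\cdot,J_k\cdot)$.

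I would then localize near a point of the open dense orbit, where the $T^2$-action is free. On a tubular neighborhood of such an orbit one has $T^2$-equivariant action--angle Darboux coordinates $(x_1,x_2,\theta_1,\theta_2)$, with $\theta_j\in\RR/\ZZ$, in which $\omega = dx_1\wedge d\theta_1 + dx_2\wedge d\theta_2$ and $T^2$ acts by translation in $(\theta_1,\theta_2)$. Starting from a $T^2$-invariant $\omega$-compatible almost-K\"ahler background metric (such metrics exist, e.g.\ by averaging), I would first, exactly as in Proposition~\ref{neg}, replace it by a nearby $T^2$-invariant metric which on a small coordinate polydisc equals the split model $dx_1^2 + d\theta_1^2 + dx_2^2 + d\theta_2^2$; the needed interpolation can be carried out using a cut-off depending only on $(x_1,x_2)$, so that $T^2$-invariance, and $C^0$-closeness to the original background, are preserved. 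On the polydisc I then deform only the $(x_1,\theta_1)$-plane, setting
$$
J_f = e^{2f}\,dx_1\otimes\partial_{\theta_1} - e^{-2f}\,d\theta_1\otimes\partial_{x_1} + dx_2\otimes\partial_{\theta_2} - d\theta_2\otimes\partial_{x_2},
$$
where $f$ is a smooth function of the single moment-map coordinate $x_2$, supported in the polydisc. The corresponding metric $g_f = e^{2f}dx_1^2 + e^{-2f}d\theta_1^2 + dx_2^2 + d\theta_2^2$ is $\omega$-compatible and, because $f$ is independent of $\theta_1,\theta_2$, is $T^2$-invariant.

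As in Proposition~\ref{neg}, a component of $N_{J_f}$ in an adapted orthonormal frame is proportional to $\partial f/\partial x_2$, so taking $f = f_k := \tfrac1k \sin\!\big(\tfrac{2\pi k^2}{\varepsilon}\, x_2\big)\,\phi$, with $\phi = \phi(x_1,x_2)$ a fixed $T^2$-invariant cut-off equal to $1$ near the orbit, yields $\|N_{J_k}\|_{L^2(g_k)}^2\to\infty$ (of order $k^2$) while $g_k\to g$ in $C^0$. Feeding $\|\nabla\omega\|_{L^2(g_k)}^2\to\infty$ back into the displayed identity gives $\int_M s_{g_k}\, d\mu\to -\infty$, and since the volume stays fixed we conclude $Y([g_k])\to -\infty$; a diagonal choice of $k$ against a shrinking polydisc radius $\varepsilon=2^{-j}$ moreover yields $C^0$-density of such classes, as in the non-equivariant case. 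The only genuine point to watch is that each ingredient --- the equivariant Darboux chart, the freezing interpolation, the cut-off $\phi$, and the perturbing function $f_k$ --- depends on the angles $\theta_j$ only through functions pulled back from the moment-map variables $(x_1,x_2)$; once this is arranged, $T^2$-invariance is automatic, and all the estimates coincide with those of Proposition~\ref{neg}. I expect this equivariant bookkeeping, rather than any new analytic input, to be the crux.
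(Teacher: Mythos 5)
Your argument is correct and is essentially the paper's own proof: both work in action--angle coordinates near a free $T^2$-orbit, deform one Darboux plane by $J_f$ with $f$ pulled back from the moment-map variables (so that invariance is automatic), and make $f$ oscillate rapidly in a transverse base coordinate so that $\|N_{J_k}\|_{L^2}\to\infty$, hence $\int s\,d\mu\to-\infty$ at fixed volume. The only difference is notational (you oscillate in the action coordinate $x_2$ where the paper oscillates in its base coordinate $v$, which is the same thing), plus the extra $C^0$-density remark, which is harmless.
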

\begin{proof}
On a neighborhood of a $T^2$ orbit, we can again take
coordinates $(x,y,u,v)$ so that \eqref{darboux} holds, with $y,v$  coordinates on the base, and with 
$(x,u)$ now $(\RR/\ZZ)$-valued fiber coordinates adapted to the action. We again consider metrics $g_f$ as in \eqref{ansatz}, 
but with  $f$  now a function of $(y,v)$ only. Choosing a sequence $f_k$ of such functions
which are highly oscillatory in $v$ in a small region makes the $L^2$ norm
of the Nijenhuis tensor tend to infinity, so that 
$\int s_{g_k}~d\mu_{g_k} \to -\infty$, while the volume $[\omega]^2/2$ remains fixed.  Hence $Y([g_{f_k}])\to -\infty$, as claimed. 
\end{proof}


\end{document}